\definecolor{myblue}{RGB}{80,80,160}
\definecolor{mygreen}{RGB}{80,160,80}
\numberwithin{equation}{section}
\newtheorem{theorem}{Theorem}[section]
\newtheorem{lemma}[theorem]{Lemma}
\newtheorem{corollary}[theorem]{Corollary}
\newtheorem{example}[theorem]{Example}
\newtheorem{definition}[theorem]{Definition}
\newtheorem{proposition}[theorem]{Proposition}
\theoremstyle{remark}
\DeclareMathOperator{\PF}{\text{PF}}
\newcommand{\aR}{\mathbf a}
\newcommand{\bR}{\mathbf b}
\newcommand{\sR}{\mathbf s}
\newcommand{\uR}{\mathbf u}
\newcommand{\vR}{\mathbf v}
\newcommand{\wR}{\mathbf w}
\newcommand{\piR}{\boldsymbol \pi}
\newcommand{\tauR}{\boldsymbol \tau}
\newcommand{\lambdaR}{\boldsymbol \lambda}
\newcommand{\alphaR}{\boldsymbol \alpha}
\newcommand{\MS}{\text{MS}}
\newcommand{\disp}{\text{disp}}
\newcommand{\PR}{\mathbb P}
\newcommand{\QR}{\mathbb Q}
\newcommand{\ER}{\mathbb E}
\newcommand{\Var}{\mathbb V \mathrm{ar}}
\newcommand{\Cov}{\mathbb C \mathrm{ov}}
\newcommand{\be}{\begin{equation}}
\newcommand{\ee}{\end{equation}}
\newcommand{\old}[1]{}
\NewDocumentCommand\DownArrow{O{2.0ex} O{black}}{%
   \mathrel{\tikz[baseline] \draw [<->, line width=0.5pt, #2] (0,0) -- ++(0,#1);}
}
\begin{document}

\forestset{parent color/.style args={#1}{
    {fill=#1},
    for tree={fill/. wrap pgfmath arg={#1!##1}{1/level()*80},draw=#1!80!black}},
    root color/.style args={#1}{fill={{#1!60!black!25},draw=#1!80!black}}
}

\title{Parking functions, multi-shuffle, and asymptotic phenomena}

\author{Mei Yin}
\thanks{MY was partially supported by the University of Denver's Faculty Research Fund 84688-145601.}
\address{Department of Mathematics, University of Denver, Denver, CO 80208}
\email{mei.yin at du.edu}

\date{\today}

\subjclass[2010]{
60C05; 
05A16, 
05A19} 
\keywords{Parking function, Multi-shuffle, Asymptotic expansion, Abel's multinomial theorem}

\begin{abstract}
Given a positive-integer-valued vector $\uR=(u_1, \dots, u_m)$ with $u_1<\cdots<u_m$. A $\uR$-parking function of length $m$ is a sequence $\piR=(\pi_1, \dots, \pi_m)$ of positive integers whose non-decreasing rearrangement $(\lambda_1, \dots, \lambda_m)$ satisfies $\lambda_i\leq u_i$ for all $1\leq i\leq m$. We introduce a combinatorial construction termed a parking function multi-shuffle to generic $\uR$-parking functions and obtain an explicit characterization of multiple parking coordinates. As an application, we derive various asymptotic probabilistic properties of a uniform $\uR$-parking function when $u_i=cm+ib$. The asymptotic scenario in the generic situation $c>0$ is in sharp contrast with that of the special situation $c=0$.
\end{abstract}

\maketitle

\section{Introduction}
\label{intro}
Parking functions were introduced by Konheim and Weiss \cite{KW}, under the name of ``parking disciplines,'' to study the following problem. Consider a parking lot with $n$ parking spots placed sequentially along a one-way street. In order, a line of $m \leq n$ cars enters the lot. The $i$th car drives to its preferred spot $\pi_i$ and parks there if possible, and otherwise takes the next available spot if it exists. The sequence of preferences $\piR=(\pi_1,\dots,\pi_m)$ is called a \emph{parking function} if all cars successfully park. We denote the set of parking functions by $\PF(m, n)$, where $m$ is the number of cars and $n$ is the number of parking spots.

Nowadays, parking functions are an established area of research in combinatorics, with connections to labeled trees and forests (Chassaing and Marckert, \cite{CM}), hyperplane arrangements, interval orders, and plane partitions (Stanley, \cite{Stanley1} \cite{Stanley2}), diagonal harmonics and $(q, t)$-analogs of Catalan numbers (Haglund et al., \cite{Haglund}), abelian sandpiles (Cori and Rossin, \cite{CR}), to mention a few. Properties of random parking functions have also been of interest to statisticians and probabilists (Diaconis and Hicks, \cite{DH}). We refer to Knuth \cite[Section 6.4]{Knuth2} and Yan \cite{Yan} for a comprehensive survey.

Given a positive-integer-valued vector $\uR=(u_1, \dots, u_m)$ with $u_1<\cdots<u_m$. A \emph{$\uR$-parking function} of length $m$ is a sequence $\piR=(\pi_1, \dots, \pi_m)$ of positive integers whose non-decreasing rearrangement $(\lambda_1, \dots, \lambda_m)$ satisfies $\lambda_i\leq u_i$ for all $1\leq i\leq m$. Denote the set of $\uR$-parking functions by $\PF(\uR)$. There is a similar interpretation for $\uR$-parking functions in terms of the parking scenario depicted above: One wishes to park $m$ cars in a one-way street with $u_m$ spots, but only $m$ spots, at positions $u_1, \dots, u_m$, are still empty \cite{KuYa}. We recognize that the parking function $\PF(m, n)$ is a special case of the more general $\uR$-parking functions with $u_i=n-m+i$.

In our previous work on $\PF(m, n)$ \cite{KY} \cite{Yin}, we introduced an original combinatorial construction which we term a \emph{parking function multi-shuffle}, and it facilitated an investigation of the properties of a parking function chosen uniformly at random from $\PF(m, n)$. This paper will delve into the essence of the multi-shuffle combinatorial construction on parking functions and introduce the concept to generic $\uR$-parking functions, thus allowing for an explicit characterization of multiple coordinates of $\uR$-parking functions. As an application, we will derive various asymptotic probabilistic properties of a uniform $(a, b)$-parking function, which is a class of $\uR$-parking functions where $u_i=a+(i-1)b$ for some positive integers $a$ and $b$. Denote the set of $(a, b)$-parking functions of length $m$ by $\PF(a, b, m)$. It coincides with $\PF(m, n)$ when $a=n-m+1$ and $b=1$. In view of and generalizing this correspondence, we will study the asymptotics of $\PF(a, b, m)$ when $b \geq 1$ is any integer and $a=cm+b$ for some $c \geq 0$. We find that for large $m$, various probabilistic quantities display strikingly different asymptotic tendency in the generic situation $c>0$ (corresponding to $m \lesssim n$) vs. the special situation $c=0$ (corresponding to $m=n$), including the boundary behavior of a single coordinate and all moments of multiple coordinates.

Our asymptotic calculation utilizes the multi-dimensional Cauchy product of the \emph{tree function} $F(z)$, which is a variant of the Lambert function, the \emph{Gon\v{c}arov polynomials} $g_m(x; a_0,  a_1, \dots, a_{m-1})$ for $m=0, 1, \dots$, which form a natural basis for working with $\uR$-parking functions, as well as \emph{Abel's multinomial theorem}. In particular, our new perspective on parking functions leads to asymptotic moment calculations for multiple coordinates of $(a, b)$-parking functions that complement the work of Kung and Yan \cite{KY2}, where the explicit formulas for the first and second factorial moments and a general form for the higher factorial moments of sums of $(a, b)$-parking functions were given.

This paper is organized as follows. Section \ref{pfs} illustrates the notion of \emph{$\uR$-parking function multi-shuffle} that decomposes a $\uR$-parking function into smaller components (Definition \ref{shuffle}). This construction offers an explicit characterization of multiple coordinates of $\uR$-parking functions (Theorems \ref{main1} and \ref{component}). Theorem \ref{component} enumerates $\uR$-parking functions in connection with Gon\v{c}arov polynomials, but we also provide an alternative description of $\uR$-parking functions in Proposition \ref{un2}. Section \ref{random} uses the multi-shuffle construction introduced in Section \ref{pfs} to investigate various properties of a parking function chosen uniformly at random from $\PF(cm+b, b, m)$. When the parking preferences $\pi_1, \dots, \pi_l$ are exactly $b$ spots apart, a simplified characterization of $(cm+b, b)$-parking functions is given in Section \ref{generic-case}. Building upon Theorem \ref{component} and Proposition \ref{un}, we compute asymptotics of all moments of multiple coordinates in Theorem \ref{general-mean} in the generic situation $c>0$ and give complete technical details for all moments of two coordinates (Theorem \ref{mean}). The asymptotic mixed moments in the generic situation $c>0$ is contrasted with that of the special situation $c=0$ in Section \ref{special}. We then focus on the boundary behavior of a single coordinate in Section \ref{section-boundary}. We find that in the generic situation $c>0$ on the right end it approximates a Borel distribution with parameter $b/(b+c)$ while on the left end it deviates from the constant value in a rescaled Poisson fashion  (Corollaries \ref{boundary2} and \ref{boundary1}). This asymptotic tendency differs from that in the special situation $c=0$, where the boundary behavior of a single coordinate on the left and right ends both approach Borel$(1)$ (Corollaries \ref{boundary2} and \ref{boundary1-special}).

\subsection*{Notations} Let $\mathbb{N}$ be the set of positive integers. For $m, n \in \mathbb{N}$, we write $[m, n]$ for the set of integers $\{m, \dots, n\}$ and $[n]=[1, n]$. For vectors $\aR, \bR \in [n]^m$, denote by $\aR\leq_C \bR$ if $a_i\leq b_i$ for all $i\in [m]$; this is the component-wise partial order on $[n]^m$. In a similar fashion, denote by $\aR <_C \bR$ if $a_i\leq b_i$ for all $i\in [m]$ and there is at least one $j \in [m]$ such that $a_j<b_j$. For $\bR \in [n]^m$, we write $[\bR]$ for the set of $\aR \in [n]^m$ with $\aR \leq_C \bR$.

\section{$u$-parking function multi-shuffle}
\label{pfs}
In this section we explore the properties of generic $\uR$-parking functions through a $\uR$-parking function multi-shuffle construction. We will write our results in terms of parking coordinates $\pi_1, \dots, \pi_l$ for explicitness, where $1\leq l \leq m$ is any integer. But due to permutation symmetry, they may be interpreted for any coordinates. Temporarily fix $\pi_{l+1}, \dots, \pi_m$. Let
\begin{equation}
A_{\pi_{l+1}, \dots, \pi_m}=\{\vR=(v_1, \dots, v_l): (v_1, \dots, v_l, \pi_{l+1}, \dots, \pi_m)\in \PF(\uR)\},
\end{equation}
where $\vR$ is in non-decreasing order.

\begin{proposition}\label{nonempty}
Take $1\leq l \leq m$ any integer. Suppose that $\vR$ is in non-decreasing order and maximally compatible (in component-wise partial order) with the fixed $\pi_{l+1}, \dots, \pi_m$. Then for all $1\leq i\leq l$, $v_i=u_{k_i}$ with $1\leq k_1<\cdots<k_l\leq m$, and $v_i \neq \pi_{l+1}, \dots, \pi_m$.
\end{proposition}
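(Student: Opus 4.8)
The plan is to reduce the statement to the standard counting description of $\uR$-parking functions. Writing $N(t)=\#\{i:\pi_i\le t\}$ for the number of preferences at most $t$, a sequence lies in $\PF(\uR)$ exactly when $N(u_j)\ge j$ for every $j\in[m]$; indeed this is just a restatement of $\lambda_j\le u_j$ for the non-decreasing rearrangement. I would split $N=f+g$, where $f(t)=\#\{i\in[l+1,m]:\pi_i\le t\}$ counts the fixed coordinates and $g(t)=\#\{i\in[l]:v_i\le t\}$ counts the free ones. Membership of $(v_1,\dots,v_l,\pi_{l+1},\dots,\pi_m)$ in $\PF(\uR)$ then reads $g(u_j)\ge j-f(u_j)$ for all $j$, equivalently $g(u_j)\ge d_j$ with $d_j:=\max(0,\,j-f(u_j))$, since $g\ge 0$ automatically.

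The key step is to convert component-wise maximality of $\vR$ into pointwise minimality of $g$. Because both $\vR$ and any competitor are non-decreasing, $\vR\leq_C\vR'$ is equivalent to $g(t)\ge g'(t)$ at every threshold $t$; thus making $\vR$ as large as possible in $\leq_C$ means making its counting function $g$ as small as possible pointwise. The pointwise-smallest non-decreasing $g$ obeying $g(u_j)\ge d_j$ is $g^*(u_j)=D_j$, where $D_j:=\max(d_1,\dots,d_j)$. The normalization $g^*(u_m)=l$ is forced: nonemptiness of $A_{\pi_{l+1},\dots,\pi_m}$ gives $d_j\le l$ for all $j$, while $f(u_m)=m-l$ gives $d_m=l$, so $D_m=l$. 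Hence $A_{\pi_{l+1},\dots,\pi_m}$ has a greatest element --- necessarily the unique maximally compatible $\vR$ --- obtained by placing $D_j-D_{j-1}$ copies of $u_j$ for each $j$, with $D_0=0$.

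All three assertions then follow by inspecting $g^*$. Since $g^*$ increases only at the thresholds $u_1,\dots,u_m$, every free coordinate equals some $u_{k_i}$. The elementary bound $d_j-d_{j-1}\le 1$ (the difference is $1$ minus the number of fixed coordinates in $(u_{j-1},u_j]$ when both terms are positive, and is checked directly in the boundary cases) forces each increment $D_j-D_{j-1}$ to be $0$ or $1$, so the occupied indices are distinct and $1\le k_1<\cdots<k_l\le m$. Finally, an actual jump $D_j>D_{j-1}$ forces $d_j>D_{j-1}\ge d_{j-1}$; together with $d_j-d_{j-1}\le 1$ this yields $f(u_j)-f(u_{j-1})=0$, i.e., no fixed coordinate lies in $(u_{j-1},u_j]$, so in particular $u_j=u_{k_i}$ differs from every $\pi_{l+1},\dots,\pi_m$.

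I expect the main obstacle to be the second step rather than any single computation: one must set up the order-reversing correspondence between $\vR$ and $g$ cleanly, verify that $g^*$ is simultaneously feasible and dominated by every competing $g$ (which is what certifies that a greatest --- hence unique maximal --- element exists, so that the hypothesis ``maximally compatible'' designates a single $\vR$), and keep careful track of the truncation at $0$ in the definition of $d_j$. Once $g^*$ is identified, the conclusions on the location, distinctness, and fixed-value avoidance of the $v_i$ are short consequences of the monotonicity of $f$ and the one-step bound on $d_j$.
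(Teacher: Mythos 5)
Your argument is correct, but it takes a genuinely different route from the paper's. The paper proves the proposition by a local perturbation: writing $v_i=\lambda_k\le u_k$ for the entry of the non-decreasing rearrangement at the largest index $k$ where the value $v_i$ occurs, it notes that if $v_i<u_k$ then replacing $v_i$ by $v_i+1$ still yields a $\uR$-parking function, contradicting maximality; hence $v_i=u_k$, and strict monotonicity of $\uR$ forces $v_i$ to be the \emph{unique} entry of $\piR$ equal to $u_k$, which simultaneously gives $k_1<\cdots<k_l$ and $v_i\neq\pi_{l+1},\dots,\pi_m$. You instead argue globally through counting functions: componentwise maximality of $\vR$ becomes pointwise minimality of $g$, and you construct the pointwise-least feasible $g^*$ explicitly (increments $D_j-D_{j-1}\in\{0,1\}$ at the thresholds $u_j$), then read off all three conclusions. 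Your approach is heavier in setup but proves strictly more: it shows that $A_{\pi_{l+1},\dots,\pi_m}$, when non-empty, has a greatest element and identifies it by a greedy rule --- which is exactly the content of the paragraph the paper devotes, \emph{after} the proposition, to locating the maximal $\vR$ and concluding $A_{\pi_{l+1},\dots,\pi_m}=[\vR]$ --- whereas the paper's perturbation argument is a few lines long and uses maximality only through single-coordinate increments, never needing existence or uniqueness of a maximum. One point to tighten in your write-up: in the final step, the identity $d_j-d_{j-1}=1-(f(u_j)-f(u_{j-1}))$ holds only when $d_{j-1}>0$; in the truncated case $d_{j-1}=0$, a jump forces $d_j=1$, i.e., $f(u_j)=j-1$, while $d_{j-1}=0$ gives $f(u_{j-1})\ge j-1$, so monotonicity of $f$ again yields $f(u_j)-f(u_{j-1})=0$. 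You flag the truncation issue yourself and the fix is routine, so this is a gloss rather than a gap.
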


\begin{proof}
Take $\piR=(v_1, \dots, v_l, \pi_{l+1}, \dots, \pi_m)$ a $\uR$-parking function. Let $\lambdaR=(\lambda_1, \dots, \lambda_m)$ be the non-decreasing rearrangement of $\piR$. Temporarily fix an arbitrary $i$, where $1\leq i\leq l$. Then for some $k$, $v_i=\lambda_k \leq u_k$. If there are multiple entries in $\piR$ (and hence $\lambdaR$) that equal $v_i$, we may assume that $k$ is the maximum such index, which implies that $\lambda_k<\lambda_{k+1}$ if $k<m$. We claim that $v_i=u_k$. Suppose otherwise that $v_i<u_k$. Then $v_i+1=\lambda_k+1 \leq u_k$ and $(\lambda_1, \dots, \lambda_{k-1}, \lambda_k+1, \lambda_{k+1}, \dots, \lambda_m)$ is the non-decreasing rearrangement of $\piR^i=(v_1, \dots, v_{i-1}, v_i+1, v_{i+1}, \dots, v_l, \pi_{l+1}, \dots, \pi_m)$, making $\piR^i$ a $\uR$-parking function. This contradicts the assumption that $\vR$ is maximal. Since $u_1<\cdots<u_m$, we further have $v_i$ is the unique entry in $\piR$ with $v_i=u_k$, as $v_i>u_j$ for any $j<k$.
\end{proof}

To identify the maximal $\vR$ in $A_{\pi_{l+1}, \dots, \pi_m}$, we arrange $\pi_i$ for $l+1\leq i\leq m$ in non-decreasing order, denoted by $\pi_{(l+1)} \leq \cdots \leq \pi_{(m)}$. Set $n_l=0$. We find the minimum index $n_i$ in order, starting with $n_{l+1}$, such that $n_i>n_{i-1}$ and $u_{n_i} \geq \pi_{(i)}$ for each $l+1\leq i\leq m$. If such $u_{n_i}$'s cannot be located, then $A_{\pi_{l+1}, \dots, \pi_m}$ is empty. Otherwise excluding these $u_{n_i}$'s from $\uR$ gives the optimal $\vR$. From the parking scheme, if $\vR\in A_{\pi_{l+1}, \dots, \pi_m}$, then $\wR\in A_{\pi_{l+1}, \dots, \pi_m}$ for all $\wR\leq_C \vR$, where $\leq_C$ is the component-wise partial order. This implies that if $A_{\pi_{l+1}, \dots, \pi_m}$ is non-empty, then there is a unique maximal element (in component-wise partial order) $\vR \in [u_m]^l$ with $v_i=u_{k_i}$ for all $1\leq i\leq l$, where $1\leq k_1<\cdots<k_l\leq m$, and $A_{\pi_{l+1}, \dots, \pi_m}=[\vR]$. Therefore given the last $m-l$ parking preferences, it is sufficient to identify the largest feasible first $l$ preferences (if exists).

\begin{example}
Take $\uR=(u_1, u_2, u_3, u_4)=(2, 3, 5, 8)$, $\pi_3=6$, and $\pi_4=2$. Then $A_{\pi_3, \pi_4}=[\vR]=[(u_2, u_3)]=[(3, 5)]$. See illustration below.
\begin{equation*}
\begin{array}{ccccc}
\pi_{(3)} & 2 & \leq & 2 & u_1\\
v_1 & &  & 3 & u_2 \\
v_2 & &  & 5 & u_3 \\
\pi_{(4)} & 6 & \leq & 8 & u_4
\end{array}
\end{equation*}
\end{example}

We will now introduce an original combinatorial construction which we term a parking function multi-shuffle to generic $\uR$-parking functions.

\begin{definition}[{$\uR$-parking function multi-shuffle}]\label{shuffle}
Take $1\leq l \leq m$ any integer. Let $\vR=(v_1, \dots, v_l) \in [u_m]^l$ be in increasing order with $v_i=u_{k_i}$ for all $1\leq i\leq l$, where $1\leq k_1<\cdots<k_l\leq m$. Say that $\pi_{l+1}, \dots, \pi_m$ is a \emph{$\uR$-parking function multi-shuffle} of $l+1$ $\uR$-parking functions $\alphaR_1 \in \PF(u_1, \dots, u_{k_1-1}), \alphaR_2 \in \PF(u_{k_1+1}-u_{k_1}, \dots, u_{k_2-1}-u_{k_1}), \dots, \alphaR_l \in \PF(u_{k_{l-1}+1}-u_{k_{l-1}}, \dots, u_{k_l-1}-u_{k_{l-1}})$, and $\alphaR_{l+1} \in \PF(u_{k_l+1}-u_{k_l}, \dots, u_m-u_{k_l})$ if $\pi_{l+1}, \dots, \pi_m$ is any permutation of the union of the $l+1$ words $\alphaR_1, \alphaR_2+(u_{k_1}, \dots, u_{k_1}), \dots, \alphaR_{l+1}+(u_{k_l}, \dots, u_{k_l})$. (If $k_{j-1}=k_{j}-1$ for some $j$, we take the corresponding $\alphaR_j$ as empty.) We will denote this by $(\pi_{l+1}, \dots, \pi_m) \in \MS(\vR)$.
\end{definition}

\begin{example}\label{example}
Take $\uR=(u_1, u_2, u_3, u_4, u_5, u_6, u_7, u_8)=(3, 4, 5, 6, 7, 8, 9, 10)$ and $\vR=(u_4, u_6)=(6, 8)$. Take $\alphaR_1=(2, 1, 2) \in \PF(u_1, u_2, u_3)=\PF(3, 4, 5)$, $\alphaR_2=(1) \in \PF(u_5-u_4)=\PF(1)$, and $\alphaR_3=(2, 1) \in \PF(u_7-u_6, u_8-u_6)=\PF(1, 2)$. Then $(2, \overline{7},2, \underline{9}, \underline{10},1) \in \MS(6, 8)$ is a multi-shuffle of the three words $(2, 1, 2)$, $(7)$, and $(10, 9)$.

Take $\uR=(u_1, u_2, u_3, u_4, u_5, u_6, u_7)=(2, 4, 5, 6, 8, 12, 15)$ and $\vR=(u_2, u_5)=(4, 8)$. Take $\alphaR_1=(1) \in \PF(u_1)=\PF(2)$, $\alphaR_2=(1, 2) \in \PF(u_3-u_2, u_4-u_2)=\PF(1, 2)$, and $\alphaR_3=(5, 3) \in \PF(u_6-u_5, u_7-u_5)=\PF(4, 7)$. Then $(\underline{11}, \underline{13}, \overline{5}, 1, \overline{6}) \in \MS(4, 8)$ is a multi-shuffle of the three words $(1)$, $(5, 6)$, and $(13, 11)$.
\end{example}

The $\uR$-parking function multi-shuffle allows for an explicit characterization of multiple coordinates of $\uR$-parking functions. It connects the identification of the maximal element in $A_{\pi_{l+1}, \dots, \pi_m}$ to the decomposition of $\pi_{l+1}, \dots, \pi_m$ into a multi-shuffle.

\begin{theorem}\label{main1}
Take $1\leq l \leq m$ any integer. Let $\vR=(v_1, \dots, v_l) \in [u_m]^l$ be in increasing order with $v_i=u_{k_i}$ for all $1\leq i\leq l$, where $1\leq k_1<\cdots<k_l\leq m$. Then $A_{\pi_{l+1}, \dots, \pi_m}=[\vR]$ if and only if $(\pi_{l+1}, \dots, \pi_m) \in \MS(\vR)$.
\end{theorem}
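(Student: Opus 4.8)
The plan is to recast the $\uR$-parking function property as a system of counting inequalities and to read off both implications from it. Writing the full word $\piR = (v_1,\dots,v_l,\pi_{l+1},\dots,\pi_m)$ and letting $N(i)$ denote the number of its entries that are $\leq u_i$, the defining requirement that the non-decreasing rearrangement $\lambdaR$ satisfy $\lambda_i\leq u_i$ is equivalent to $N(i)\geq i$ for every $i\in[m]$ (since $\lambda_i\leq u_i$ for all $i$ iff at least $i$ entries lie weakly below $u_i$ for each $i$). Because $v_q=u_{k_q}$, exactly the slots $v_1,\dots,v_p$ are $\leq u_{k_p}$, so $N(k_p)=p+P(u_{k_p})$, where $P(t)$ counts the fixed preferences $\pi_{l+1},\dots,\pi_m$ that are $\leq t$. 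This elementary bookkeeping is the single tool I would use throughout; all the work lies in extracting tightness and range information from it.

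For the direction $(\pi_{l+1},\dots,\pi_m)\in\MS(\vR)\Rightarrow A_{\pi_{l+1},\dots,\pi_m}=[\vR]$, I would first verify the parking property. Splitting $\piR$ into the $l$ slots and the $l+1$ shuffled blocks, the parking property of $\alphaR_p$ says precisely that, after the shift by $u_{k_{p-1}}$, block $p$ contributes at least $s-k_{p-1}$ values below $u_s$ for each $k_{p-1}<s\leq k_p-1$. Summing these local inequalities across the slots and blocks lying below $u_i$ telescopes to $N(i)\geq i$ for every $i$, with equality forced at each $i=k_p$. Maximality is then immediate: if some $\wR\in A_{\pi_{l+1},\dots,\pi_m}$ had $\wR\geq_C\vR$ with $w_p>u_{k_p}$, then monotonicity of $\wR$ makes every $w_q$ with $q\geq p$ exceed $u_{k_p}$, so at most $p-1$ slots together with the $k_p-p$ preferences below $u_{k_p}$ leave the count at index $k_p$ at most $k_p-1$, contradicting the parking property. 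Hence $\vR$ is maximal, and by the uniqueness of the maximal element noted before the theorem, $A_{\pi_{l+1},\dots,\pi_m}=[\vR]$.

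For the converse $A_{\pi_{l+1},\dots,\pi_m}=[\vR]\Rightarrow(\pi_{l+1},\dots,\pi_m)\in\MS(\vR)$, the crux — and the step I expect to be the main obstacle — is converting maximality into exact tightness $N(k_p)=k_p$ for every $p$. I would obtain this by a minimal perturbation: replacing $v_p=u_{k_p}$ by $u_{k_p+1}$ produces a still non-decreasing vector $\vR'>_C\vR$ (here $u_{k_p+1}\leq u_{k_{p+1}}$ since $k_p+1\leq k_{p+1}$) that alters only the count at index $k_p$, lowering it by one; the boundary case $k_l=m$ gives $N(m)=m$ automatically. Since $\vR'\notin A_{\pi_{l+1},\dots,\pi_m}$ by maximality, the parking inequality must fail at $k_p$, forcing $N(k_p)\leq k_p$, hence $N(k_p)=k_p$ and $P(u_{k_p})=k_p-p$. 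Evaluating $N(k_p-1)\geq k_p-1$ then squeezes $P(u_{k_p-1})=k_p-p$ as well, so no preference lies in $(u_{k_p-1},u_{k_p}]$; this pins the preferences into the correct disjoint value ranges and gives each block $B_p:=\{\pi_i\in(u_{k_{p-1}},u_{k_p-1}]\}$ the right cardinality $k_p-k_{p-1}-1$.

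Finally, I would promote these range counts to the parking property of each block. For $k_{p-1}<s\leq k_p-1$ the inequality $N(s)\geq s$ reads $(p-1)+P(u_s)\geq s$, and subtracting the already-established value $P(u_{k_{p-1}})=k_{p-1}-(p-1)$ leaves $|\{B_p\leq u_s\}|\geq s-k_{p-1}$, which is exactly the assertion that $B_p$ shifted down by $u_{k_{p-1}}$ lies in $\PF(u_{k_{p-1}+1}-u_{k_{p-1}},\dots,u_{k_p-1}-u_{k_{p-1}})$. Thus $\{\pi_{l+1},\dots,\pi_m\}$ is the disjoint union of the shifted parking functions $\alphaR_1,\dots,\alphaR_{l+1}$, and since $(\pi_{l+1},\dots,\pi_m)$ is merely an arrangement of this union it lies in $\MS(\vR)$ by Definition \ref{shuffle}. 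I expect the delicate points to be the careful separation of the value ranges (ruling out preferences strictly between $u_{k_p-1}$ and $u_{k_p}$) and the clean bookkeeping that makes the local block inequalities telescope into the global ones; Proposition \ref{nonempty} and the greedy description preceding the theorem may be invoked to guarantee that the maximal element has the assumed form $v_i=u_{k_i}$ and that $A_{\pi_{l+1},\dots,\pi_m}$ equals the down-set of its unique maximal element.
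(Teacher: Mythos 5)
Your proof is correct, and it is essentially the paper's argument in different clothing: both directions rest on the same single-coordinate perturbation (your $v_p \mapsto u_{k_p+1}$ plays the role of the paper's $v_i \mapsto v_i+1$) to force tightness exactly at the indices $k_p$, the same decomposition of the preferences into the value ranges $(u_{k_{p-1}}, u_{k_p-1}]$, and the same appeal to the down-set structure and unique maximal element of $A_{\pi_{l+1},\dots,\pi_m}$ established in the discussion preceding the theorem. The only substantive difference is notational: you encode the parking condition through the counts $N(i)\geq i$, whereas the paper manipulates the non-decreasing rearrangement $\lambdaR$ directly (via Proposition \ref{nonempty}), and these two formulations are equivalent line by line.
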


\begin{proof}
``$\Longrightarrow$'' Take $\piR=(v_1, \dots, v_l, \pi_{l+1}, \dots, \pi_m)$ a $\uR$-parking function, where $\vR=(u_{k_1}, \dots, u_{k_l})$ with $1\leq k_1<\cdots<k_l\leq m$ maximally compatible with the fixed $\pi_{l+1}, \dots, \pi_m$. Let $\lambdaR=(\lambda_1, \dots, \lambda_m)$ be the non-decreasing rearrangement of $\piR$. Temporarily fix an arbitrary $i$, where $1\leq i\leq l$. Then for some $j$, $v_i=u_{k_i}=\lambda_j \leq u_j$. We claim that $k_i=j$. Suppose otherwise that $k_i<j$. Then $v_i+1=\lambda_j+1=u_{k_i}+1 \leq u_j$ since $\uR$ is strictly increasing. From Proposition \ref{nonempty}, the entry $v_i$ is unique in $\lambdaR$, so $v_i=\lambda_j<\lambda_{j+1}$. This says that $(\lambda_1, \dots, \lambda_{j-1}, \lambda_j+1, \lambda_{j+1}, \dots, \lambda_m)$ is the non-decreasing rearrangement of $\piR^i=(v_1, \dots, v_{i-1}, v_i+1, v_{i+1}, \dots, v_l, \pi_{l+1}, \dots, \pi_m)$, making $\piR^i$ a $\uR$-parking function, contradicting the assumption that $\vR$ is maximal. Therefore $v_i$ is the $k_i$-th entry in the non-decreasing rearrangement of $\piR$ for all $1\leq i\leq l$.

Hence excluding the first $l$ cars which take values in $u_{k_1}, \dots, u_{k_l}$, $\piR$ has exactly $k_1-1$ cars with non-decreasing preference $\leq u_1, \dots, \leq u_{k_1-1}$ respectively (name the subsequence $\alphaR_1$), exactly $k_2-k_1-1$ cars with non-decreasing preference $\geq u_{k_1}+1$ and $\leq u_{k_1+1}$, $\dots$, $\geq u_{k_1}+1$ and $\leq u_{k_2-1}$ (name the subsequence $\alphaR_2'$), $\dots$, exactly $k_l-k_{l-1}-1$ cars with non-decreasing preference $\geq u_{k_{l-1}}+1$ and $\leq u_{k_{l-1}+1}$, $\dots$, $\geq u_{k_{l-1}}+1$ and $\leq u_{k_l-1}$ respectively (name the subsequence $\alphaR_l'$), and exactly $m-k_l$ cars with non-decreasing preference $\geq u_{k_l}+1$ and $\leq u_{k_l+1}$, $\dots$, $\geq u_{k_l}+1$ and $\leq u_{m}$ (name the subsequence $\alphaR_{l+1}'$). Construct $\alphaR_2=\alphaR_2'-(u_{k_1}, \dots, u_{k_1}), \dots, \alphaR_{l+1}=\alphaR_{l+1}'-(u_{k_l}, \dots, u_{k_l})$. It is clear from the above reasoning that $\alphaR_1 \in \PF(u_1, \dots, u_{k_1-1}), \alphaR_2 \in \PF(u_{k_1+1}-u_{k_1}, \dots, u_{k_2-1}-u_{k_1}), \dots, \alphaR_l \in \PF(u_{k_{l-1}+1}-u_{k_{l-1}}, \dots, u_{k_l-1}-u_{k_{l-1}})$, and $\alphaR_{l+1} \in \PF(u_{k_l+1}-u_{k_l}, \dots, u_m-u_{k_l})$. By Definition \ref{shuffle}, $(\pi_{l+1}, \dots, \pi_m) \in \MS(\vR)$.

``$\Longleftarrow$'' We first show that $\piR=(v_1, \dots, v_l, \pi_{l+1}, \dots, \pi_m)$ is a $\uR$-parking function. This is immediate, since from Definition \ref{shuffle}, the non-decreasing rearrangement of $\piR$ is a concatenation of $\alphaR_1$, $u_{k_1}$, $\alphaR_2+(u_{k_1}, \dots, u_{k_1})$, $u_{k_2}$, $\dots$, $\alphaR_l+(u_{k_{l-1}}, \dots, u_{k_{l-1}})$, $u_{k_l}$, and $\alphaR_{l+1}+(u_{k_l}, \dots, u_{k_l})$.

Next we show that $\piR^i=(v_1, \dots, v_{i-1}, v_i+1, v_{i+1}, \dots, v_l, \pi_{l+1}, \dots, \pi_m)$ is not a parking function for any $1 \leq i\leq l$. This is also immediate since the non-decreasing rearrangement of $\piR^i$ only differs from the non-decreasing rearrangement of $\piR$ in the $k_i$-th position with value $u_{k_i}+1>u_{k_i}$.

Combining, we have $A_{\pi_{l+1}, \dots, \pi_m}=[\vR]$.
\end{proof}

\begin{example}[Continued from Example \ref{example}] Take $\uR=(3, 4, 5, 6, 7, 8, 9, 10)$, then $A_{2, 7, 2, 9, 10, 1}=[(6, 8)]$ is equivalent to $(2, 7, 2, 9, 10, 1) \in \MS(6, 8)$. Take $\uR=(2, 4, 5, 6, 8, 12, 15)$, then $A_{11, 13, 5, 1, 6}=[(4, 8)]$ is equivalent to $(11, 13, 5, 1, 6) \in \MS(4, 8)$. See illustration below.
\begin{equation*}
\begin{array}{ccccc}
  \pi_{(3)} & 1 & \leq & 3 & u_1 \\
  \pi_{(4)} & 2 & \leq & 4 & u_2 \\
  \pi_{(5)} & 2 & \leq & 5 & u_3 \\
   v_1 &  &  & 6 & u_4 \\
  \pi_{(6)} & 7 & \leq & 7 & u_5 \\
   v_2 &  &  & 8 & u_6 \\
  \pi_{(7)} & 9 & \leq & 9 & u_7 \\
  \pi_{(8)} & 10 & \leq & 10 & u_8
\end{array} \hspace{3cm}
\begin{array}{ccccc}
  \pi_{(3)} & 1 & \leq & 2 & u_1 \\
  v_1 &  & & 4 & u_2 \\
  \pi_{(4)} & 5 & \leq & 5 & u_3 \\
  \pi_{(5)} & 6 & \leq & 6 & u_4 \\
  v_2 & & & 8 & u_5 \\
  \pi_{(6)} & 11 & \leq & 12 & u_6 \\
  \pi_{(7)} & 13 & \leq & 15 & u_7
\end{array}
\end{equation*}
\end{example}

Of relevance to our investigation, we also introduce the Gon\v{c}arov polynomials. Let $(a_0, a_1, \dots)$ be a sequence of numbers. The Gon\v{c}arov polynomials $g_m(x; a_0,  a_1, \dots, a_{m-1})$ for $m=0, 1, \dots$ are the basis of solutions to the Gon\v{c}arov interpolation problem in numerical analysis. They are defined by the biorthogonality relation:
\begin{equation}
\epsilon(a_i) D^i g_m(x; a_0, a_1, \dots, a_{m-1})=m! \delta_{im},
\end{equation}
where $\epsilon(a_i)$ is evaluation at $a_i$, $D$ is the differentiation operator, and $\delta_{im}$ is the Kronecker delta. The Gon\v{c}arov polynomials satisfy many nice algebraic and analytic properties, making them very useful in analysis and combinatorics. Specifically, we list two properties of Gon\v{c}arov polynomials below:
\begin{enumerate}
\item Determinant formula.
\begin{equation}
g_m(x; a_0, a_1, \dots, a_{m-1}) = m! \hspace{.1cm} \left| \begin{matrix}
1 & a_0 & \frac{a_0^2}{2!} & \frac{a_0^3}{3!} & \cdots & \frac{a_0^{m-1}}{(m-1)!} & \frac{a_0^m}{m!} \\
0 & 1 & a_1 & \frac{a_1^2}{2!} & \cdots & \frac{a_1^{m-2}}{(m-2)!} & \frac{a_1^{m-1}}{(m-1)!} \\
0 & 0 & 1 & a_2 & \cdots & \frac{a_2^{m-3}}{(m-3)!} & \frac{a_2^{m-2}}{(m-2)!} \\
\vdots & \vdots & \vdots & \vdots & \ddots & \vdots & \vdots \\
0 & 0 & 0 & 0 & \cdots & 1 & a_{m-1} \\
1 & x & \frac{x^2}{2!} & \frac{x^3}{3!} & \cdots & \frac{x^{m-1}}{(m-1)!} & \frac{x^m}{m!}
\end{matrix} \right|.
\end{equation}
\item Shift invariance.
\begin{equation}
g_m(x+y; a_0+y, a_1+y, \dots, a_{m-1}+y)=g_m(x; a_0, a_1, \dots, a_{m-1}).
\end{equation}
\end{enumerate}
We note that the number of $\uR$-parking functions of length $m$ is $|\PF(\uR)|=(-1)^m g_m(0; u_1, \dots, u_m)$. For a full discussion of the connection between Gon\v{c}arov polynomials and $\uR$-parking functions, we refer to Kung and Yan \cite{KuYa}.

\begin{theorem}\label{component}
Take $1\leq l \leq m$ any integer. Let $\wR=(w_1, \dots, w_l) \in [u_m]^l$ be in non-decreasing order. The number of parking functions $\piR\in \PF(\uR)$ with $\pi_1=w_1, \dots, \pi_l=w_l$ is
\begin{equation}
(-1)^{m-l}\sum_{\sR \in S_l(\wR)} \binom{m-l}{\sR} \prod_{i=1}^{l+1} g_{s_i}(u_{s_1+\cdots+s_{i-1}+i-1}; u_{s_1+\cdots+s_{i-1}+i}, \dots, u_{s_1+\cdots+s_i+i-1}),
\end{equation}
where $s_0=u_0=0$, $g_{s_i}(\cdot)$ are the Gon\v{c}arov polynomials, and
\begin{equation}
S_l(\wR)=\left\{\sR=(s_1, \dots, s_{l+1}) \in \mathbb{N}^{l+1} \left|\right. \substack{u_{s_1+\cdots +s_i+i} \geq w_i \hspace{.1cm} \forall i\in [l] \\ s_1+\cdots+s_{l+1}=m-l}\right\}.
\end{equation}
Note that this quantity stays constant if all $w_i \leq u_i$ and decreases as each $w_i$ increases past $u_i$ as there are fewer resulting summands.
\end{theorem}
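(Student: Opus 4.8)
The plan is to express the count as a sum, over the possible maximal compatible vectors $\vR$, of the number of tails realizing each $\vR$, then evaluate each such number via Definition \ref{shuffle} and repackage it using $|\PF(\uR)|=(-1)^m g_m(0;u_1,\dots,u_m)$ together with shift invariance.

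First I would reduce to counting tails. Since $\wR$ is non-decreasing, the full sequence $\piR=(w_1,\dots,w_l,\pi_{l+1},\dots,\pi_m)$ is a $\uR$-parking function exactly when $\wR\in A_{\pi_{l+1},\dots,\pi_m}$. By the discussion preceding Definition \ref{shuffle}, this set is either empty or equals $[\vR]$ for a unique maximal $\vR=(u_{k_1},\dots,u_{k_l})$ with $1\leq k_1<\cdots<k_l\leq m$; being a down-set, it contains $\wR$ iff $\wR\leq_C\vR$, i.e. $w_i\leq u_{k_i}$ for all $i$. Grouping tails by their maximal $\vR$ and applying Theorem \ref{main1}, which identifies $\{(\pi_{l+1},\dots,\pi_m):A_{\pi_{l+1},\dots,\pi_m}=[\vR]\}$ with $\MS(\vR)$, the desired count becomes $\sum_{\vR:\,\wR\leq_C\vR}|\MS(\vR)|$, the sum ranging over strictly increasing $(k_1,\dots,k_l)$ subject to $w_i\leq u_{k_i}$.

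Next I would compute $|\MS(\vR)|$ directly. Reversing the block decomposition from the forward direction of the proof of Theorem \ref{main1}, and using that $\PF$-sets are closed under rearrangement, a tail lies in $\MS(\vR)$ iff it splits into $l+1$ blocks according to the value-ranges cut out by $u_{k_1},\dots,u_{k_l}$, where the $i$-th block, shifted down by $u_{k_{i-1}}$ (with $k_0=0$, $u_0=0$), is an arbitrary parking function in $\PF(u_{k_{i-1}+1}-u_{k_{i-1}},\dots,u_{k_i-1}-u_{k_{i-1}})$. Writing $s_i$ for the block lengths, this is a bijection between $\MS(\vR)$ and a choice of which of the $m-l$ positions fall in each block together with a parking function for each block, so that
\[
|\MS(\vR)|=\binom{m-l}{s_1,\dots,s_{l+1}}\prod_{i=1}^{l+1}\bigl|\PF(u_{k_{i-1}+1}-u_{k_{i-1}},\dots,u_{k_i-1}-u_{k_{i-1}})\bigr|.
\]
Converting each factor by $|\PF|=(-1)^{s_i}g_{s_i}(0;\cdot)$ and applying shift invariance with $y=u_{k_{i-1}}$ turns $g_{s_i}(0;u_{k_{i-1}+1}-u_{k_{i-1}},\dots)$ into $g_{s_i}(u_{k_{i-1}};u_{k_{i-1}+1},\dots,u_{k_i-1})$, and the $l+1$ signs multiply to $(-1)^{m-l}$ because $\sum_i s_i=m-l$.

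The remaining step is the reindexing $k_i=s_1+\cdots+s_i+i$ (equivalently $s_1=k_1-1$, $s_i=k_i-k_{i-1}-1$, $s_{l+1}=m-k_l$), a bijection between strictly increasing $(k_1,\dots,k_l)$ and weak compositions $(s_1,\dots,s_{l+1})$ of $m-l$; under it the constraint $w_i\leq u_{k_i}$ becomes $u_{s_1+\cdots+s_i+i}\geq w_i$, which is exactly membership in $S_l(\wR)$, and the evaluation/parameter indices of each Gon\v{c}arov factor take the stated form, proving the displayed formula. The monotonicity remark is then immediate from this description of $S_l(\wR)$: when every $w_i\leq u_i$ the requirement $u_{s_1+\cdots+s_i+i}\geq u_i\geq w_i$ holds automatically (the index is $\geq i$), so $S_l(\wR)$ is the full set of weak compositions of $m-l$ and the count does not depend on $\wR$; as some $w_i$ grows past $u_i$, the requirement eliminates the compositions with $s_1+\cdots+s_i$ too small, shrinking $S_l(\wR)$ and hence the sum. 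I expect the only real obstacle to be organizational: checking that the block decomposition is a genuine bijection with no overcounting across choices of the block parking functions, and keeping the two index systems $k_\bullet$ and $s_\bullet$ consistent through the shift-invariance step.
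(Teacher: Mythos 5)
Your proposal is correct and follows essentially the same route as the paper: group the tails $(\pi_{l+1},\dots,\pi_m)$ by the unique maximal compatible $\vR=(u_{k_1},\dots,u_{k_l})$ with $u_{k_i}\geq w_i$, invoke Theorem \ref{main1} to identify each group with $\MS(\vR)$, count each multi-shuffle set as a multinomial coefficient times the product of block parking-function counts rewritten as signed Gon\v{c}arov polynomials via shift invariance, and reindex through $s_i=k_i-k_{i-1}-1$. The paper compresses these steps into a single displayed sum over the $k_i$'s, so your write-up simply makes explicit (the block-decomposition bijection, the sign bookkeeping, and the monotonicity remark) what the paper leaves implicit.
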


\begin{proof}
If $\pi_i=w_i$ for $1\leq i\leq l$, then $A_{\pi_{l+1}, \dots, \pi_m}=[\vR]$ where $v_i=u_{k_i}\geq w_i$ and $k_i \geq k_{i-1}+1$ with $k_0=u_0=0$. Set $k_{l+1}=m+1$. Thus from Theorem \ref{main1}, the number of $\uR$-parking functions with $\pi_1=w_1, \dots, \pi_l=w_l$ is
\begin{align}
&\sum_{\forall i\in [l]: \hspace{.1cm} \substack{k_i \geq k_{i-1}+1 \\ u_{k_i} \geq w_i}} \binom{m-l}{k_1-k_0-1, \dots, k_{l+1}-k_l-1} \prod_{i=1}^{l+1} (-1)^{k_i-k_{i-1}-1} g_{k_i-k_{i-1}-1}(u_{k_{i-1}}; u_{k_{i-1}+1}, \dots, u_{k_i-1}) \notag \\
&=(-1)^{m-l}\sum_{\sR \in S_l(\wR)} \binom{m-l}{\sR} \prod_{i=1}^{l+1} g_{s_i}(u_{s_1+\cdots+s_{i-1}+i-1}; u_{s_1+\cdots+s_{i-1}+i}, \dots, u_{s_1+\cdots+s_i+i-1}), \label{withk}
\end{align}
where $\sR=(k_1-k_0-1, \dots, k_{l+1}-k_l-1)$.
\end{proof}

For the special case $l=0$ and $\wR=()$ (where no parking preferences are specified), we recover the total number of $\uR$-parking functions $|\PF(\uR)|=(-1)^m g_m(0; u_1, \dots, u_m)$. We describe an alternative characterization of this number in the following.

\begin{proposition}\label{un2}
The number of $\uR$-parking functions $|\PF(\uR)|$ satisfies
\begin{equation}
|\PF(\uR)|=\sum_{\sR \in C(m)} \binom{m}{\sR} \prod_{i=1}^{u_m-m+1} (s_i+1)^{s_i-1},
\end{equation}
where $C(m)$ consists of compositions of $m$: $\sR=(s_1, \dots, s_{u_m-m+1}) \models m$ with $\sum_{i=1}^{u_m-m+1} s_i=m$, subject to $s_1+\cdots+s_{u_i-i+1}\geq i$ for all $1\leq i\leq m$.
\end{proposition}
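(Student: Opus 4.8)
The plan is to establish Proposition~\ref{un2} by a bijective breakpoint decomposition of a different flavor than the multi-shuffle of Theorems~\ref{main1} and \ref{component}: instead of cutting $\piR$ at the prescribed values $v_i=u_{k_i}$, I will cut it at its \emph{full points} and record the sizes of the resulting blocks as a composition of $m$. Concretely, for a composition $\sR=(s_1,\dots,s_{u_m-m+1})$ of $m$, introduce the auxiliary street $\uR^{(\sR)}$ obtained by concatenating $s_1$ empty spots, one occupied spot, $s_2$ empty spots, one occupied spot, $\dots$, and finally $s_{u_m-m+1}$ empty spots. This street has total length $\sum_j s_j+(u_m-m)=u_m$ and exactly $m$ empty spots, so its empty-spot vector $\uR^{(\sR)}$ is a length-$m$ increasing vector; a short count shows $u^{(\sR)}_i=i+j_i-1$, where $j_i$ is the index of the block containing the $i$-th empty spot, i.e. the least $j$ with $s_1+\cdots+s_j\ge i$.

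I then read off the two factors in the summand. The sequences that park on $\uR^{(\sR)}$ with a full point at every occupied divider spot are exactly the shuffles of an ordinary parking function on each block: choosing which of the $m$ labelled cars land in each block accounts for $\binom{m}{\sR}$, and since the empty spots inside block $j$ are $s_j$ \emph{consecutive} positions, the admissible normalized preferences there range over $\PF(s_j,s_j)$, whose cardinality is the classical $(s_j+1)^{s_j-1}=(-1)^{s_j}g_{s_j}(0;1,\dots,s_j)$. Hence each $\sR$ contributes $\binom{m}{\sR}\prod_{j}(s_j+1)^{s_j-1}$ such sequences. Next I show that such a sequence is a genuine $\uR$-parking function precisely when $\uR^{(\sR)}\leq_C\uR$, since a parking function for the pointwise-smaller street is automatically one for $\uR$; using $u^{(\sR)}_i=i+j_i-1$ this inequality unwinds to $j_i\le u_i-i+1$ for all $i$, equivalently $s_1+\cdots+s_{u_i-i+1}\ge i$ for all $1\le i\le m$, which is exactly the defining constraint of $C(m)$.

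It remains to verify that these contributions assemble into $\PF(\uR)$ without overlap, i.e. that every $\uR$-parking function is a block-shuffle for one and only one $\sR\in C(m)$. For existence I decompose a given $\piR\in\PF(\uR)$ at its full points (values $v$ with $\#\{k:\pi_k\le v\}=\#\{i:u_i\le v\}$), take $\sR$ to be the vector of block sizes delimited by these points, and check $\uR^{(\sR)}\leq_C\uR$ so that $\sR\in C(m)$. For uniqueness I argue that if $\piR$ is a block-shuffle for some $\sR$, then each divider spot of $\uR^{(\sR)}$ must be a full point of $\piR$ and conversely, so that $\sR$ is forced by the intrinsic full-point set of $\piR$. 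Summing $\binom{m}{\sR}\prod_j(s_j+1)^{s_j-1}$ over $\sR\in C(m)$ then yields $|\PF(\uR)|$.

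The main obstacle is this last bijectivity step, and in particular the book-keeping that matches full points of $\piR$ with divider spots of $\uR^{(\sR)}$: I must rule out the spurious crossings in which a car in the shuffle construction overflows past a divider (which would make a nominal breakpoint fail to be a genuine full point), and dually ensure that padding the genuinely occurring blocks with size-zero blocks at the non-breakpoint occupied positions produces a well-defined composition into exactly $u_m-m+1$ parts. Establishing that the full-point set is intrinsic to $\piR$, independent of any presumed $\sR$, is what guarantees disjointness of the contributions and hence that each $\uR$-parking function is counted exactly once.
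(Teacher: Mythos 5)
Your counting skeleton coincides with the paper's: cut the street at the $u_m-m$ divider spots, distribute the labelled cars over blocks via $\binom{m}{\sR}$, count $(s_j+1)^{s_j-1}$ classical parking functions per block, and translate the condition on divider positions into the constraint defining $C(m)$ (your unwinding $u^{(\sR)}_i=i+j_i-1\le u_i \Leftrightarrow s_1+\cdots+s_{u_i-i+1}\ge i$ is correct and agrees with the paper's condition). But the step you yourself flag as the main obstacle --- that every $\piR\in\PF(\uR)$ is a block-shuffle for exactly one $\sR$, and that this $\sR$ lies in $C(m)$ --- is left unresolved, and the specific device you propose for it fails. Your ``full points'' are defined by $\#\{k:\pi_k\le v\}=\#\{i:u_i\le v\}$, i.e.\ by comparison against $\uR$, and this does not locate the dividers. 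Take $\uR=(2,5)$ and $\piR=(1,1)$: your full-point set is $\{5\}$, a single value, which cannot delimit a composition of $2$ into $u_m-m+1=4$ parts; the unique $\sR$ for which $\piR$ is a block-shuffle is $(2,0,0,0)$, whose dividers sit at spots $3,4,5$. Similarly for $\piR=(1,5)$ your full points are $\{2,5\}$ while the true dividers are $\{2,3,4\}$ (giving $\sR=(1,0,0,1)$). Since the divider set is exactly what determines $\sR$, a definition that misses it cannot deliver existence, uniqueness, or membership in $C(m)$; relatedly, the ``only if'' half of your claim that block-shuffles for $\sR$ lie in $\PF(\uR)$ ``precisely when'' $\uR^{(\sR)}\leq_C\uR$ also rests on this unproven disjointness.

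The intrinsic characterization that closes the gap is the one the paper uses: park $\piR$ on the street $[u_m]$ with \emph{all} spots initially empty, and let $j_1<\cdots<j_{u_m-m}$ be the spots left empty (equivalently, never attempted) at the end of the process. This set is defined by $\piR$ alone. No car ever crosses an empty spot, so the cars landing between consecutive empty spots form a classical parking function on each gap; this gives existence, and it forces uniqueness because any presumed divider set of a block-shuffle reproduces precisely these empty spots. Finally, membership in $C(m)$ is read off from the parking \emph{outcome} rather than the preferences: since $\uR$ is strictly increasing, $\piR\in\PF(\uR)$ holds iff the sorted outcome satisfies $\lambda_i\le u_i$ for all $i$, which is equivalent to having at most $u_i-i$ empty spots among the first $u_i$ spots, i.e.\ $j_{u_i-i+1}>u_i$, i.e.\ $s_1+\cdots+s_{u_i-i+1}\ge i$. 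Substituting this outcome-based definition for your count-based one turns your outline into the paper's proof; without it, the bijectivity step on which the whole count rests is not established.
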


\begin{proof}
For a parking function $\piR\in \PF(\uR)$, there are $u_m-m$ parking spots that are never attempted by any car. Let $j_i(\piR)$ for $i=1, \dots, u_m-m$ represent these spots, so that $0:=j_0<j_1<\cdots<j_{u_m-m}<j_{u_m-m+1}:=u_m+1$. Set $u_0=0$. This separates $\piR$ into $u_m-m+1$ disjoint non-interacting segments (some segments might be empty), with each segment a classical parking function of length $(j_{i}-j_{i-1}-1)$ after translation, consisting of $\pi_k \to \pi_k-j_{i-1}$ for $j_{i-1}<\pi_k<j_i$. Additionally, there are some constraints imposed on the $j_i$'s. Let $\tauR(\piR)$ denote the parking outcome of $\piR$, where the $i$th car parks in spot $\tau_i$ with $1\leq \tau_i\leq u_m$. Since $\uR$ is strictly increasing, the increasing rearrangement $\lambdaR=(\lambda_1, \dots, \lambda_m)$ of $\tauR$ satisfies $\lambda_i \leq u_i$ for all $1\leq i\leq m$. We note that $\lambda_i \leq u_i$ is equivalent to saying that there are at least $i$ taken spots within the first $u_i$ spots, which is further equivalent to having at most $u_i-i$ empty spots within the first $u_i$ spots. This is achieved if and only if $j_{u_i-i+1}>u_i$. Recall that there are $(n+1)^{n-1}$ classical parking functions of length $n$. We have
\begin{align}
&|\PF(\uR)|=\sum_{\forall i\in [m]: \hspace{.1cm} j_{u_i-i+1}>u_i} \prod_{i=1}^{u_m-m+1} \binom{m}{j_1-j_0-1, \dots, j_{u_m-m+1}-j_{u_m-m}-1} (j_{i}-j_{i-1})^{j_{i}-j_{i-1}-2} \notag \\
&=\sum_{\substack{\sR \models m \\ s_1+\cdots+s_{u_i-i+1}\geq i \hspace{.1cm} \forall i\in [m]}} \binom{m}{s_1, \dots, s_{u_m-m+1}} \prod_{i=1}^{u_m-m+1} (s_i+1)^{s_i-1},
\end{align}
where $\sR=(j_1-j_0-1, \dots, j_{u_m-m+1}-j_{u_m-m}-1)$.
\end{proof}

\begin{example}
Take $\uR=(2, 5)$. Then $(\pi_1, \pi_2) \in \PF(\uR)$ satisfies
\begin{align}
&(\pi_1, \pi_2) \in A:= \{(1, 1), (1, 2), (1, 3), (1, 4), (1, 5), (2, 1), \notag \\
&(2, 2), (2, 3), (2, 4), (2, 5), (3, 1), (3, 2), (4, 1), (4, 2), (5, 1), (5, 2)\}.
\end{align}
From Theorem \ref{un2},
\begin{align}
&|A|=\binom{2}{2, 0, 0, 0} 3^1 1^{-1}1^{-1} 1^{-1}+\binom{2}{0, 2, 0, 0} 1^{-1} 3^1 1^{-1} 1^{-1}+\binom{2}{1, 1, 0, 0} 2^0 2^0 1^{-1} 1^{-1}\notag \\
&\hspace{1.5cm}+\binom{2}{1, 0, 1, 0} 2^0 1^{-1} 2^0 1^{-1}+\binom{2}{1, 0, 0, 1} 2^0 1^{-1} 1^{-1} 2^0\notag \\
&\hspace{1.5cm}+\binom{2}{0, 1, 1, 0} 1^{-1} 2^0 2^0 1^{-1}+\binom{2}{0, 1, 0, 1} 1^{-1} 2^0 1^{-1} 2^0=3+3+2+2+2+2+2=16.
\end{align}
\end{example}

\section{Properties of random $(a, b)$-parking functions}
\label{random}
In general, there are no nice closed-form expressions for Gon\v{c}arov polynomials related to $\uR$-parking functions, but such expressions exist for a specific class of $\uR$-parking functions. When the entries of the vector $\uR$ form an arithmetic progression: $u_i=a+(i-1)b$ for some positive integers $a$ and $b$, we get \emph{Abel polynomials}:
\begin{equation}\label{gon}
g_m(x; a, a+b, \dots, a+(m-1)b)=(x-a)(x-a-mb)^{m-1}.
\end{equation}
We call these $\uR$-parking functions $(a, b)$-parking functions, and denote the set of $(a, b)$-parking functions of length $m$ by $\PF(a, b, m)$. Using (\ref{gon}), the number of $(a, b)$-parking functions of length $m$ is $|\PF(a, b, m)|=(-1)^m g_m(0; a, a+b, \dots, a+(m-1)b)=a(a+mb)^{m-1}$.

In this section we use the multi-shuffle construction introduced in Section \ref{pfs} to investigate various properties of a parking function chosen uniformly at random from $\PF(a, b, m)$. As stated in the introduction, taking $b=1$, $c=n/m-1$, and $a=cm+b$, an $(a, b)$-parking function of length $m$ depicts the scenario of parking $m$ cars in $n$ spots sequentially along a one-way street. Therefore among all possible $a$ and $b$, of particular interest to us is when $a=cm+b$ for some $c\geq 0$. We will write our results in terms of coordinates $\pi_1, \dots, \pi_l$ of parking functions, where $1\leq l\leq m$ is any integer. However, the parking coordinates satisfy permutation symmetry, so the statements in this section may be interpreted for any coordinates.

Before proceeding with the calculations, we outline an effective method for generating a random $(a, b)$-parking function of length $m$. The algorithm is suggested by Stanley's generalization \cite{Stanley1} of Pollak's circle argument for parking functions \cite{Pollak}. To select $\pi \in \PF(a, b, m)$ uniformly at random:
\begin{enumerate}
\item Pick an element $\pi \in (\mathbb{Z}/(a+mb)\mathbb{Z})^m$, where the equivalence class representatives are taken in $1, \dots, a+mb$.

\item For $k \in \{0, \dots, a+mb-1\}$, record $k$ if $\pi+k(1, \dots, 1)\in \PF(a, b, m)$ (modulo $a+mb$), where $(1, \dots, 1)$ is a vector of length $m$. There should be exactly $a$ such $k$'s.

\item Pick one $k$ from (2) uniformly at random. Then $\pi+k(1, \dots, 1)$ is an $(a, b)$-parking function of length $m$ taken uniformly at random.
\end{enumerate}

Figure \ref{distribution} shows a histogram of the values of $\pi_1$ based on $100, 000$ random samples of $\PF(cm+b, b, m)$ for $m=100$ and $b=2$. The left plot is for $c=1$ and the right plot is for $c=0$. A closed formula for the distribution of $\pi_1$ as well as its asymptotic approximation will be provided in Section \ref{section-boundary}.

\begin{figure}
\begin{center}\includegraphics[width=6in]{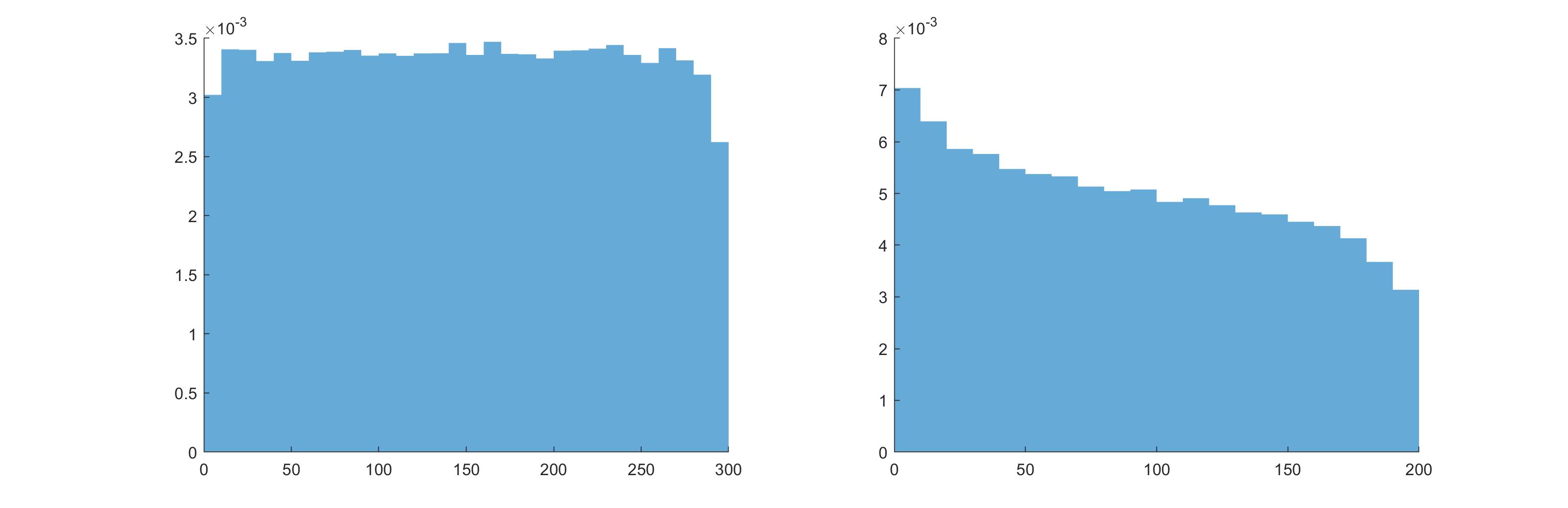}\end{center}
\caption{\label{distribution}
The distribution of $\pi_1$ (the first parking coordinate) in $100,000$ samples of $(cm+b, b)$-parking functions chosen uniformly at random, where $m=100$ and $b=2$. In the left plot $c=1$ and in the right plot $c=0$.}
\end{figure}

\subsection{Mixed moments of multiple coordinates}\label{generic-case}
In this subsection we study the asymptotics of the generic mixed moments of $(a, b)$-parking functions of length $m$ when $a=cm+b$ for some $c>0$ via a tree function approach. Building upon Theorem \ref{component} and Proposition \ref{un2}, we first count the number of $(a, b)$-parking functions of length $m$ where the specified parking preferences of the first $l$ cars are exactly $b$ spots apart. This calculation will be central in deriving an asymptotic formula for the mixed moments.

\begin{proposition}\label{un}
Take $1\leq l \leq m$ any integer. Let $0\leq k\leq m-l$. The number of parking functions $\piR\in \PF(a, b, m)$ with $\pi_1=a+kb, \dots, \pi_l=a+(k+l-1)b$ is
\begin{equation}
a \sum_{s=0}^{m-l-k} \binom{m-l}{s} (a+(m-l-s)b)^{m-s-l-1}b^s l (s+l)^{s-1}.
\end{equation}
\end{proposition}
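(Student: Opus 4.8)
The plan is to specialize Theorem \ref{component} to the arithmetic vector $u_i = a+(i-1)b$ with the prescribed preferences $w_i = a+(k+i-1)b$, evaluate every Gon\v{c}arov factor in closed form through the Abel polynomial identity (\ref{gon}), and then collapse the resulting multi-index sum to a single sum by invoking Abel's multinomial theorem.

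First I would substitute $\wR=(a+kb,\dots,a+(k+l-1)b)$ into the formula of Theorem \ref{component}. Writing $P_i:=s_1+\cdots+s_{i-1}$ and $q_i:=P_i+i-1$, the $i$-th Gon\v{c}arov factor is $g_{s_i}(u_{q_i}; u_{q_i+1},\dots,u_{q_i+s_i})$. For $i\geq 2$ the arguments $u_{q_i+1},\dots,u_{q_i+s_i}$ form an arithmetic progression with initial term $a+q_ib$ and common difference $b$, so (\ref{gon}) combined with shift invariance gives $g_{s_i}(u_{q_i};\dots)=(u_{q_i}-(a+q_ib))(u_{q_i}-(a+q_ib)-s_ib)^{s_i-1}$; since $u_{q_i}-(a+q_ib)=-b$, this telescopes to $(-1)^{s_i}(s_i+1)^{s_i-1}b^{s_i}$, independent of $i$. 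For $i=1$ the evaluation point is $u_0=0$ (per the convention of Theorem \ref{component}), and the identical computation yields $g_{s_1}(0;u_1,\dots,u_{s_1})=(-1)^{s_1}a(a+s_1b)^{s_1-1}$. I would then unpack the index set $S_l(\wR)$: the constraint $u_{s_1+\cdots+s_i+i}\geq w_i$ reduces to $s_1+\cdots+s_i\geq k$ for every $i\in[l]$, and since all $s_i\geq 0$ this is equivalent to the single inequality $s_1\geq k$. Multiplying the factors, the global sign $(-1)^{m-l}$ cancels $(-1)^{s_1+\cdots+s_{l+1}}=(-1)^{m-l}$, leaving a positive expression with $a$ pulled to the front.

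Finally I would reparametrize by $s:=s_2+\cdots+s_{l+1}$, so that $s_1=m-l-s$ and $s_1\geq k$ becomes $s\leq m-l-k$, matching the stated range. Splitting the multinomial coefficient as $\binom{m-l}{s_1,\dots,s_{l+1}}=\binom{m-l}{s}\binom{s}{s_2,\dots,s_{l+1}}$ and pulling out $(a+(m-l-s)b)^{m-l-s-1}$ together with $\prod_{i=2}^{l+1}b^{s_i}=b^{s}$, the residual inner sum is $\sum_{s_2+\cdots+s_{l+1}=s}\binom{s}{s_2,\dots,s_{l+1}}\prod_{i=2}^{l+1}(s_i+1)^{s_i-1}$. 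By Abel's multinomial theorem, in the form $\sum_{k_1+\cdots+k_p=n}\binom{n}{k_1,\dots,k_p}\prod_{i=1}^{p}x_i(x_i+k_i)^{k_i-1}=(\sum_i x_i)(\sum_i x_i+n)^{n-1}$ specialized to $p=l$ parts, $n=s$, and all $x_i=1$, this inner sum equals $l(s+l)^{s-1}$. Assembling the pieces reproduces the claimed formula.

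The main obstacle I anticipate is the bookkeeping in the factor evaluation: correctly tracking the shifted index $q_i$ so that the ``evaluation point minus leading argument'' telescopes to the constant $-b$ in every interior segment, and in particular handling the first segment separately because Theorem \ref{component} uses $u_0=0$ rather than the arithmetic continuation $a-b$. The conceptual crux is recognizing the collapsed inner sum as exactly this instance of Abel's multinomial theorem---matching $\sum_i x_i$, $\prod_i x_i$, and $(\sum_i x_i+n)^{n-1}$ against the target $l(s+l)^{s-1}$---though the identity itself is classical.
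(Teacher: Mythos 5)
Your proposal is correct, and its skeleton coincides with the paper's proof: specialize Theorem \ref{component} at $w_i=a+(k+i-1)b$, evaluate every Gon\v{c}arov factor through the Abel polynomial identity (\ref{gon}) (with the first segment handled separately because of the convention $u_0=0$), observe that the constraint set $S_l(\wR)$ collapses to the single inequality $s_1\geq k$, and finish with the change of variables $s=m-l-s_1$. The one genuine difference is how the inner sum
\[
\sum_{(s_2,\dots,s_{l+1})\,\models\, s}\binom{s}{s_2,\dots,s_{l+1}}\prod_{i=2}^{l+1}(s_i+1)^{s_i-1}
\]
is evaluated. You invoke the classical Abel--Hurwitz multinomial identity (identity (\ref{1}) of Theorem \ref{Abel}, with all $x_i=1$, $p=l$ parts, $n=s$) to get $l(s+l)^{s-1}$ directly. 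The paper instead recognizes this sum, via its own Proposition \ref{un2}, as the parking-function count $|\PF(\uR)|$ for $\uR=(l,l+1,\dots,m-1-s_1)$ (the composition constraints there are vacuous in this arithmetic case), and then evaluates that count by the Gon\v{c}arov/Abel closed form as $l(m-s_1)^{m-l-s_1-1}$, which equals $l(s+l)^{s-1}$ after the substitution. The two evaluations are equivalent in content --- Proposition \ref{un2} plus the Abel polynomial formula is essentially a combinatorial proof of exactly this instance of Abel's multinomial theorem --- so nothing is gained or lost logically; your route leans on a classical identity (which the paper only states later, in Section \ref{special}), while the paper's route keeps the argument self-contained within its own chain Theorem \ref{component} $\to$ Proposition \ref{un2}, which is presumably why it was written that way.
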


\begin{proof}
We take $w_i=a+(k+i-1)b$ for $1\leq i\leq l$ in Theorem \ref{component} and extract $s_1$ from the multinomial coefficient $\binom{m-l}{\sR}$:
\begin{align}\label{original}
&(-1)^{m-l} \sum_{s_1=k}^{m-l} \binom{m-l}{s_1} g_{s_1}(0; a, \dots, a+(s_1-1)b)\Big[ \sum_{(s_2, \dots, s_{l+1}) \models m-l-s_1} \binom{m-l-s_1}{s_2, \dots, s_{l+1}} \cdot \notag \\
&\cdot \prod_{i=2}^{l+1} g_{s_i}(a+(s_1+\cdots+s_{i-1}+i-2)b; a+(s_1+\cdots+s_{i-1}+i-1)b, \dots, a+(s_1+\cdots+s_i+i-2)b)\Big] \notag \\
&=a \sum_{s_1=k}^{m-l} \binom{m-l}{s_1} (a+s_1b)^{s_1-1}b^{m-l-s_1} \sum_{(s_2, \dots, s_{l+1}) \models m-l-s_1} \binom{m-l-s_1}{s_2, \dots, s_{l+1}} \prod_{i=2}^{l+1} (s_i+1)^{s_i-1}.
\end{align}
We compare against Proposition \ref{un2} and note that taking $\uR=(l, l+1, \dots, m-1-s_1)$,
\begin{multline}\label{sub}
\sum_{(s_2, \dots, s_{l+1}) \models m-l-s_1} \binom{m-l-s_1}{s_2, \dots, s_{l+1}} \prod_{i=2}^{l+1} (s_i+1)^{s_i-1}=|\PF(\uR)|\\
=(-1)^{m-l-s_1}g_{m-l-s_1}(0; l, l+1, \dots, m-1-s_1)=l(m-s_1)^{m-l-s_1-1}.
\end{multline}
Using (\ref{sub}), (\ref{original}) then becomes
\begin{align}
&a \sum_{s_1=k}^{m-l} \binom{m-l}{s_1} (a+s_1b)^{s_1-1}b^{m-l-s_1} l (m-s_1)^{m-l-s_1-1} \notag \\
&=a \sum_{s=0}^{m-l-k} \binom{m-l}{s} (a+(m-l-s)b)^{m-s-l-1}b^s l (s+l)^{s-1},
\end{align}
where the last equality is a change of variables $s=m-l-s_1$.
\end{proof}

The following technical lemma will also be needed in the derivation of asymptotic mixed moments.

\begin{lemma}\label{technical_lemma}
Take $l \geq 1$ any integer and $m$ large. Take $b \geq 1$ any integer and $a=cm+b$ for some $c>0$. Set $u_i=a+(i-1)b$ for $i \geq 1$ and $u_0=0$. For $1\leq i\leq l$, take $p_i \geq 1$ any integer and $S_i \sim m$ with $S_1<\cdots<S_l$. Let $\sum_{j=u_{s}+1}^{u_{s+1}} j^{p_i}=f_i(s)$ for $s\geq 0$. Then
\begin{equation}\label{left}
\sum_{\substack{\#\{i: \hspace{.05cm} s_i \leq S_k\} \geq k \\ \forall k \in [l]}} \prod_{i=1}^l f_i(s_i)
=\frac{(cm+(S_l+1)b)^{\sum_{i=1}^l p_i+l}}{\prod_{i=1}^l (p_i+1)} \left(1+\frac{1}{(b+c)m}\left(\frac{\sum_{i=1}^l p_i+l}{2}\right)+O\left(\frac{1}{m^2}\right)\right).
\end{equation}
\end{lemma}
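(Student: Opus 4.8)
The plan is to show that the summation constraint is asymptotically non-binding at order $1/m$, so that the sum factorizes into a product of one-dimensional power sums, and then to read off the $1/m$ correction from the Euler--Maclaurin (Faulhaber) expansion of each power sum.

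First I would recast the index set. Writing $s_{(1)}\le\cdots\le s_{(l)}$ for the increasing rearrangement of $(s_1,\dots,s_l)$, the requirement $\#\{i:s_i\le S_k\}\ge k$ for all $k\in[l]$ is exactly $s_{(k)}\le S_k$ for all $k\in[l]$. The weakest of these, $k=l$, reads $s_{(l)}\le S_l$, i.e. $s_i\le S_l$ for every $i$; thus the full index set is the cube $[0,S_l]^l$, and the constraints $s_{(k)}\le S_k$ for $k<l$ only shrink it. Each $f_i$ is nonnegative and increasing in $s$, with $f_i(s)$ of order $b\,u_s^{p_i}=\Theta(m^{p_i})$ for $s$ of order $m$, so any window of $O(1)$ consecutive indices near $m$ carries only an $O(1/m)$ fraction of the full one-dimensional sum $T_i:=\sum_{s\le S_l}f_i(s)=\Theta(m^{p_i+1})$.

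The crux is to control the discarded ``bad'' region $\mathcal B=\{s_{(k)}>S_k\text{ for some }k<l\}$. If $s_{(k)}>S_k$ for some $k\le l-1$, then the $l-k+1\ge 2$ largest coordinates all lie in $(S_k,S_l]\subseteq(S_1,S_l]$, so $\mathcal B$ is contained in the event that at least two coordinates fall in the window $(S_1,S_l]$. In the regime of the application the pinned preferences are exactly $b$ apart (cf. Proposition \ref{un}), so the $S_i$ are clustered with $S_l-S_1=O(1)$ and $S_l=m+O(1)$; hence $(S_1,S_l]$ contains boundedly many integers and the window sum $B_i:=\sum_{S_1<s\le S_l}f_i(s)=\sum_{u_{S_1+1}<j\le u_{S_l+1}}j^{p_i}$ is a bounded number of terms, each $O(m^{p_i})$, giving $B_i=O(m^{p_i})$ and thus $B_i/T_i=O(1/m)$. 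A union bound over the constantly many coordinate pairs then yields
\begin{equation*}
\sum_{\mathcal B}\prod_{i=1}^l f_i(s_i)\le\Big(\prod_{i=1}^l T_i\Big)\sum_{i<i'}\frac{B_iB_{i'}}{T_iT_{i'}}=O\!\left(\frac{1}{m^2}\right)\prod_{i=1}^l T_i,
\end{equation*}
so the constrained sum equals $\big(\prod_{i=1}^l T_i\big)\big(1+O(1/m^2)\big)$, decoupling the coordinates at the required precision. I expect this window estimate to be the main obstacle: it is the only genuinely combinatorial step, it is where the clustering of the $S_i$ is actually used, and it is where the bound must be pushed to $O(1/m^2)$ rather than merely $o(1/m)$.

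It then remains to evaluate $\prod_i T_i$. Because the blocks $[u_s+1,u_{s+1}]$ for $0\le s\le S_l$ tile $[1,u_{S_l+1}]$ (using $u_0=0$), each factor telescopes to a pure power sum $T_i=\sum_{j=1}^N j^{p_i}$ with $N:=u_{S_l+1}=cm+(S_l+1)b$. Faulhaber's formula gives $\sum_{j=1}^N j^{p}=\frac{N^{p+1}}{p+1}+\frac{N^p}{2}+O(N^{p-1})$, so $T_i=\frac{N^{p_i+1}}{p_i+1}\big(1+\frac{p_i+1}{2N}+O(1/N^2)\big)$. Multiplying over $i$, the leading factor is $\frac{N^{\sum_i p_i+l}}{\prod_i(p_i+1)}$ and the product of correction brackets is $1+\frac{1}{2N}\sum_i(p_i+1)+O(1/N^2)=1+\frac{\sum_i p_i+l}{2N}+O(1/m^2)$. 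Finally, since $S_l=m+O(1)$ we have $N=(b+c)m+O(1)$, so in the bounded correction coefficient $\frac{1}{2N}=\frac{1}{2(b+c)m}+O(1/m^2)$, while the leading power $N^{\sum_i p_i+l}=(cm+(S_l+1)b)^{\sum_i p_i+l}$ is retained exactly. Combining with the $1+O(1/m^2)$ factor from the decoupling step produces (\ref{left}), all error contributions being $O(1/m^2)$.
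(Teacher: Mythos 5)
Your proof follows essentially the same route as the paper's: its ``stages'' are precisely your decomposition of the constrained sum into the full cube $[0,S_l]^l$ minus the constraint-violating region, and its closing display is your Faulhaber evaluation $\prod_{i=1}^l\sum_{s_i=0}^{S_l}f_i(s_i)=\prod_{i=1}^l\frac{(cm+(S_l+1)b)^{p_i+1}}{p_i+1}\bigl(1+\frac{p_i+1}{2(b+c)m}+O(m^{-2})\bigr)$. Your order-statistics reformulation of the constraint, the containment of the bad region in ``at least two coordinates lie in $(S_1,S_l]$,'' the union bound, and the bookkeeping of the $1/m$ coefficient are all sound.

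The substantive discrepancy is the hypothesis under which your error estimates hold. Both your bad-region bound and your replacement of $\frac{1}{2N}$ by $\frac{1}{2(b+c)m}$ (where $N=cm+(S_l+1)b$) require $S_l-S_1=O(1)$ and $m-S_l=O(1)$, which is stronger than the stated hypothesis $S_i\sim m$; moreover your justification for importing this is misattributed: the clustering has nothing to do with the pinned preferences being $b$ apart (Proposition \ref{un} concerns the diagonal term), but with the cutoffs in the application, which in the proofs of Theorems \ref{mean} and \ref{general-mean} are $T_k=m-l+k-1-\sum_{j=k}^l s_j$ with consecutive gaps $1+s_k$; the $s_k$ there are outer summation indices running up to order $m$, so the cutoffs are not uniformly clustered --- they are only effectively so because the tree-function weights decay rapidly in the $s_k$. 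That said, your instinct about where the difficulty sits is sharper than the paper's own treatment, which merely asserts that the subtracted sums are ``of lower order'': under $S_i\sim m$ alone this gives only $o(1)$ relative error, not the claimed $O(1/m^2)$. Indeed, for $l=2$, $S_1=m-\lfloor\sqrt{m}\rfloor$, $S_2=m$, the subtracted term $B_1B_2$ has relative size $\Theta(1/m)$ and would shift the $1/m$ coefficient in \eqref{left}. So your argument proves the lemma exactly in the clustered regime where it holds at the stated precision and where it is actually used; neither your proof nor the paper's covers the literal hypothesis $S_i \sim m$, but you made the needed assumption explicit while the paper elides it.
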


\begin{proof}
Notice that the left side of (\ref{left}) may be alternatively computed in stages.

\vspace{.1cm}

Stage 1: We sum up $\prod_{i=1}^l f_i(s_i)$, where the $s_i$'s all range from $0$ to $S_l$.

Stage 2: We subtract the sum of $\prod_{i=1}^l f_i(s_i)$, where the $s_i$'s all range from $S_1+1$ to $S_l$ (so none of the $s_i$'s $\leq S_1$).

Stage 3: We subtract the sum of $\prod_{i=1}^l f_i(s_i)$, where one of the $s_i$'s ranges from $0$ to $S_1$ while the others all range from $S_2+1$ to $S_l$ (so only one of the $s_i$'s $\leq S_2$).

$\vdots$

Stage $l$: We subtract the sum of $\prod_{i=1}^l f_i(s_i)$, where one of the $s_i$'s ranges from $0$ to $S_1$, one ranges from $0$ to $S_2$, $\dots$, one ranges from $0$ to $S_{l-2}$, while the two remaining $s_i$'s both range from $S_{l-1}+1$ to $S_l$ (so only $l-2$ of the $s_i$'s $\leq S_{l-1}$).

\vspace{.1cm}

\noindent For illustration, we perform this alternative procedure when $l=3$.
\begin{align}
&\sum_{s_1=0}^{S_3} \sum_{s_2=0}^{S_3} \sum_{s_3=0}^{S_3} f_1(s_1)f_2(s_2)f_3(s_3)-\sum_{s_1=S_1+1}^{S_3} \sum_{s_2=S_1+1}^{S_3} \sum_{s_3=S_1+1}^{S_3} f_1(s_1)f_2(s_2)f_3(s_3)\notag \\
&-\left(\sum_{s_1=0}^{S_1} f_1(s_1) \sum_{s_2=S_2+1}^{S_3} \sum_{s_3=S_2+1}^{S_3} f_2(s_2) f_3(s_3)+\sum_{s_2=0}^{S_1} f_2(s_2) \sum_{s_1=S_2+1}^{S_3} \sum_{s_3=S_2+1}^{S_3} f_1(s_1) f_3(s_3)\right.\notag \\
&\hspace{5cm} \left.+\sum_{s_3=0}^{S_1} f_3(s_3) \sum_{s_1=S_2+1}^{S_3} \sum_{s_2=S_2+1}^{S_3} f_1(s_1) f_2(s_2)\right).
\end{align}

Since $S_i \sim m$, the sums subtracted in Stages $2$ through $l$ are all of lower order than the sum in Stage $1$. The conclusion then follows from standard asymptotic analysis on the leading order term, which may be estimated by
\begin{equation}
\prod_{i=1}^l \sum_{s_i=0}^{S_l} f_i(s_i)=\prod_{i=1}^l \sum_{j=1}^{a+S_l b} j^{p_i}=\prod_{i=1}^l \frac{(cm+(S_l+1)b)^{p_i+1}}{p_i+1}\left(1+\frac{p_i+1}{2(b+c)m}+O\left(\frac{1}{m^2}\right)\right).
\end{equation}
\end{proof}

We are now ready to establish an asymptotic result for the mixed moments of two coordinates.

\begin{theorem}\label{mean}
Take $p, q\geq 1$ any integer and $m$ large. Take $b \geq 1$ any integer and $a=cm+b$ for some $c>0$. For parking function $\piR$ chosen uniformly at random from $\PF(a, b, m)$, we have
\begin{equation}\label{moment1}
\ER(\pi_1^p)=\frac{((b+c)m)^p}{p+1} \left(1+\frac{1}{c(b+c)m}\left(\frac{c(p+1)}{2}-b^2p\right)+O\left(\frac{1}{m^2}\right)\right),
\end{equation}
and
\begin{equation}\label{moment2}
\ER(\pi_1^p \pi_2^q)=\frac{((b+c)m)^{p+q}}{(p+1)(q+1)} \left(1+\frac{1}{c(b+c)m}\left(\frac{c(p+q+2)}{2}-b^2(p+q)\right)+O\left(\frac{1}{m^2}\right)\right).
\end{equation}
\end{theorem}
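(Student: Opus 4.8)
The plan is to compute both moments as value-weighted counts of parking functions. Writing $N_1(w)$ for the number of $\piR \in \PF(a,b,m)$ with $\pi_1 = w$, and $N_2(w_1,w_2)$ for the number with $\pi_1 = w_1$, $\pi_2 = w_2$, the permutation symmetry of the coordinates gives
\begin{equation*}
\ER(\pi_1^p) = \frac{1}{a(a+mb)^{m-1}}\sum_{w=1}^{u_m} w^p N_1(w), \qquad \ER(\pi_1^p\pi_2^q) = \frac{1}{a(a+mb)^{m-1}}\sum_{w_1,w_2} w_1^p w_2^q\, N_2(w_1,w_2),
\end{equation*}
where I used $|\PF(a,b,m)| = a(a+mb)^{m-1}$ and the second sum runs over all ordered pairs (the count $N_2$ depends only on the unordered pair, so Theorem \ref{component}, which assumes a non-decreasing argument, applies after this symmetrization). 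First I would substitute the count from Theorem \ref{component}, equivalently Proposition \ref{un} at the anchor values $w_i = a + k_i b$. The structural point is that this count, as a function of $\wR$, is piecewise constant: it changes only when some $w_i$ crosses a threshold $u_j$, because the admissible index set $S_l(\wR)$ depends on $\wR$ solely through the inequalities $u_{s_1 + \cdots + s_i + i} \geq w_i$. Grouping the $w$-values into the blocks $(u_s, u_{s+1}]$ converts the weighted value-sums into sums of the block powers $f_i(s) = \sum_{j=u_s+1}^{u_{s+1}} j^{p_i}$ of Lemma \ref{technical_lemma}.

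Next I would interchange summation, fixing the composition $\sR$ and summing over the compatible $\wR$ first. At the level of block indices, the constraints $u_{s_1+\cdots+s_i+i}\geq w_i$, together with the symmetrization over orderings, become exactly the majorization condition $\#\{i: s_i \leq S_k\}\geq k$ of Lemma \ref{technical_lemma} with $S_k = s_1 + \cdots + s_k + k - 1$; since $p \neq q$ the (non-symmetric) summand $\prod_i f_i$ automatically encodes both orderings of $(w_1,w_2)$. Applying the lemma evaluates this inner sum as $(cm+(S_l+1)b)^{\sum p_i + l}/\prod(p_i+1)$ times the displayed factor $1 + \frac{1}{(b+c)m}\cdot\frac{\sum p_i + l}{2} + O(m^{-2})$, where the governing threshold is $cm+(S_l+1)b = u_{s_1 + \cdots + s_l + l + 1}$. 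The remaining Gon\v{c}arov factors $\prod g_{s_i}(\cdots)$ then collapse by Abel's multinomial theorem exactly as in the proof of Proposition \ref{un} (peeling off $g_{s_1}(0; a, \ldots) = (-1)^{s_1} a(a+s_1 b)^{s_1-1}$ and recognizing the rest as a shifted $\uR$-parking count), reducing everything to a one-parameter Abel sum.

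Finally I would evaluate that one-parameter Abel sum. At its leading power it is the Abel identity $\sum_s \binom{n}{s} x(x+sz)^{s-1}(y+(n-s)z)^{n-s} = (x+y+nz)^n$ with $x = z = b$, $y = a$, $n = m - l$; this reproduces $a(a+mb)^{m-1}$, and hence the normalization, when $p = q = 0$. The additional factor $(a+(m-1-s)b)^{\sum p_i}$ raises the exponent on the $(y+(n-s)z)$ term, and such raised-exponent Abel sums are evaluated by Abel's multinomial theorem, equivalently through the coefficient asymptotics of powers of the tree function $F(z) = z e^{F(z)}$. The genuine obstacle is the $O(1/m)$ bookkeeping: the correction $\frac{c(p+1)}{2} - b^2 p$ (and its two-variable analogue) appears only after one combines the $\frac{\sum p_i + l}{2}$ furnished by Lemma \ref{technical_lemma} (the Euler--Maclaurin correction of the block power sums), the subleading term in the ratio of consecutive Abel sums, and the expansions $a = cm + b$ and $a + mb = (b+c)m + b$, followed by division by $a(a+mb)^{m-1}$. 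Along the way one must also verify that the outer sum concentrates in the regime $S_k \sim m$ where the hypotheses of Lemma \ref{technical_lemma} hold, and that both the lemma's lower stages and the off-diagonal $w_1 = w_2$ terms contribute only at order $O(m^{-2})$; these tail estimates are the delicate part.
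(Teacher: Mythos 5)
Your skeleton is the same as the paper's: express the moments as value-weighted counts, use the fact that the count from Theorem \ref{component} is piecewise constant on blocks $(u_s, u_{s+1}]$ to pass to the block power sums $f(s), g(s)$, interchange the order of summation, apply Lemma \ref{technical_lemma}, and finish with tree-function/Abel asymptotics after dividing by $a(a+mb)^{m-1}$. Your proposed ending differs in one respect: you collapse the composition sum exactly by an Abel identity down to a one-parameter sum before doing asymptotics, whereas the paper keeps the two-dimensional sum, recognizes a double Cauchy product, and evaluates it with $F$, $F'$, $F''$ (including the cross term $z^2F'(z)^2/F(z)$). A one-parameter reduction is indeed available, but not in the direction you describe: since the factor furnished by Lemma \ref{technical_lemma} depends on the composition only through $s_1+\cdots+s_l$ (equivalently through the last part $s_{l+1}$), the variable you must keep is $s_{l+1}$, and the Abel collapse must run over $(s_1,\dots,s_l)$ \emph{including} the factor $a(a+s_1b)^{s_1-1}$; ``peeling off $g_{s_1}$ and collapsing the rest as in Proposition \ref{un}'' fails for $l=2$, because after fixing $s_1$ the summand still carries the threshold factor depending on $s_2$, so it is not an Abel sum.

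The genuine gap is your final claim that the $w_1=w_2$ terms (more precisely, all pairs whose two coordinates fall in the same block) contribute only $O(m^{-2})$. They contribute at relative order $1/m$: the same-block part of $\ER(\pi_1^p\pi_2^q)$ is roughly $\sum_{\sigma}\PR(\pi_1,\pi_2 \in (u_\sigma,u_{\sigma+1}])\,u_\sigma^{p+q} \asymp \frac{b^2}{((b+c)m)^2}\sum_{\sigma=0}^{m-1}u_\sigma^{p+q} \asymp m^{p+q-1}$, which is a $\Theta(1/m)$ fraction of the leading term $((b+c)m)^{p+q}/((p+1)(q+1))$. That is precisely the order of the correction $\frac{1}{c(b+c)m}\bigl(\frac{c(p+q+2)}{2}-b^2(p+q)\bigr)$ that the theorem asserts, so discarding these terms yields a wrong constant in (\ref{moment2}). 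The paper does not discard them: the sum (\ref{begin}) is split into a distinct-block part, handled by Theorem \ref{component}, \emph{plus} a same-block part, handled by Proposition \ref{un} (this is the main use of that proposition), and the two pieces are then recombined so that the inner value sum is exactly the majorization sum of Lemma \ref{technical_lemma}, which includes equal block indices. So either you apply the lemma to the full majorization sum, in which case the diagonal is already inside your main term and there is nothing left to estimate, or you set it aside as an error term, in which case it cannot be bounded by $O(m^{-2})$. As written, your plan does the latter and would not recover the stated $1/m$ correction.
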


\begin{proof}
Set $u_i=a+(i-1)b$ for $1\leq i\leq m$ and $u_0=0$. We break apart the parking preferences of the first two cars into blocks of size $u_i-u_{i-1}$:
\begin{multline}\label{above}
\sum_{j=1}^{u_m} \sum_{k=1}^{u_m} j^p k^q \#\{\piR\in \PF(a, b, m): \pi_1=j, \pi_2=k\}\\
=\sum_{s_1=0}^{m-1} \sum_{j=u_{s_1}+1}^{u_{s_1+1}} \sum_{s_2=0}^{m-1} \sum_{k=u_{s_2}+1}^{u_{s_2+1}} j^p k^q \#\{\piR\in \PF(a, b, m): \pi_1=u_{s_1+1}, \pi_2=u_{s_2+1}\}.
\end{multline}
Let $\sum_{j=u_{s}+1}^{u_{s+1}} j^p=f(s)$ and $\sum_{k=u_{s}+1}^{u_{s+1}} k^q=g(s)$. Then (\ref{above}) is equivalent to
\begin{multline}\label{begin}
\sum_{s_1=0}^{m-2}\sum_{s_2=s_1+1}^{m-1} \#\{\piR\in \PF(a, b, m): \pi_1=u_{s_1+1}, \pi_2=u_{s_2+1}\} (f(s_1)g(s_2)+f(s_2)g(s_1))\\
+\sum_{s_1=0}^{m-2} \#\{\piR\in \PF(a, b, m): \pi_1=u_{s_1+1}, \pi_2=u_{s_1+2}\} f(s_1)g(s_1).
\end{multline}

By Theorem \ref{component}, the first term of (\ref{begin}) is
\begin{align}\label{mid}
&a \sum_{s_1=0}^{m-2} \sum_{s_2=s_1+1}^{m-1} (f(s_1)g(s_2)+f(s_2)g(s_1)) \sum_{t_1=s_1}^{m-2} \sum_{t_2=\max(0, s_2-1-t_1)}^{m-2-t_1} \binom{m-2}{t_1, t_2, m-2-t_1-t_2} \cdot \notag \\
&\hspace{1.5cm} \cdot b^{m-2-t_1} (a+t_1b)^{t_1-1}(t_2+1)^{t_2-1} (m-2-t_1-t_2+1)^{m-2-t_1-t_2-1}\notag \\
&=a \sum_{t_1=0}^{m-2} \sum_{t_2=0}^{m-2-t_1} \binom{m-2}{t_1, t_2, m-2-t_1-t_2} b^{m-2-t_1} (a+t_1b)^{t_1-1}(t_2+1)^{t_2-1} \cdot \notag \\ &\hspace{1.5cm} \cdot (m-2-t_1-t_2+1)^{m-2-t_1-t_2-1} \sum_{s_1=0}^{t_1} \sum_{s_2=s_1+1}^{t_1+t_2+1} (f(s_1)g(s_2)+f(s_2)g(s_1)).
\end{align}
We make a change of variables: $s=t_2$ and $t=m-2-t_1-t_2$. Then (\ref{mid}) becomes
\begin{align}\label{middle}
&a \sum_{s=0}^{m-2} \sum_{t=0}^{m-2-s} \binom{m-2}{s, t, m-2-s-t} b^{s+t} (a+(m-2-s-t)b)^{m-3-s-t} \cdot \notag \\
&\hspace{1.5cm} \cdot (s+1)^{s-1}(t+1)^{t-1} \sum_{s_1=0}^{m-2-s-t} \sum_{s_2=s_1+1}^{m-1-t} (f(s_1)g(s_2)+f(s_2)g(s_1)).
\end{align}
Similarly, by Proposition \ref{un}, the second term of (\ref{begin}) is
\begin{align}\label{simple}
&a \sum_{s_1=0}^{m-2} f(s_1)g(s_1) \sum_{t_1=s_1}^{m-2} \sum_{t_2=0}^{m-2-t_1} \binom{m-2}{t_1, t_2, m-2-t_1-t_2} \cdot \notag \\
&\hspace{1.5cm} \cdot b^{m-2-t_1} (a+t_1b)^{t_1-1}(t_2+1)^{t_2-1} (m-2-t_1-t_2+1)^{m-2-t_1-t_2-1}\notag \\
&=a \sum_{t_1=0}^{m-2} \sum_{t_2=0}^{m-2-t_1} \binom{m-2}{t_1, t_2, m-2-t_1-t_2} b^{m-2-t_1} (a+t_1b)^{t_1-1}(t_2+1)^{t_2-1} \cdot \notag \\ &\hspace{1.5cm} \cdot (m-2-t_1-t_2+1)^{m-2-t_1-t_2-1} \sum_{s_1=0}^{t_1} f(s_1)g(s_1).
\end{align}
We make a change of variables: $s=t_2$ and $t=m-2-t_1-t_2$. Then (\ref{simple}) becomes
\begin{align}\label{middle2}
&a \sum_{s=0}^{m-2} \sum_{t=0}^{m-2-s} \binom{m-2}{s, t, m-2-s-t} b^{s+t} (a+(m-2-s-t)b)^{m-3-s-t} \cdot \notag \\
&\hspace{1.5cm} \cdot (s+1)^{s-1}(t+1)^{t-1} \sum_{s_1=0}^{m-2-s-t} f(s_1)g(s_1).
\end{align}

Using Lemma \ref{technical_lemma}, for $p, q \geq 1$, (\ref{middle})+(\ref{middle2}) is asymptotically
\begin{align}
&\label{before}a \sum_{s=0}^{m-2} \sum_{t=0}^{m-2-s} \binom{m-2}{s, t, m-2-s-t} b^{s+t} (a+(m-2-s-t)b)^{m-3-s-t} \cdot \notag \\
&\hspace{1.5cm} \cdot (s+1)^{s-1}(t+1)^{t-1} \frac{(cm+(m-t)b)^{p+q+2}}{(p+1)(q+1)} \left(1+\frac{1}{(b+c)m}\left(\frac{p+q+2}{2}\right)+O\left(m^{-2}\right)\right) \notag \\
&=\frac{cm+b}{(p+1)(q+1)}((b+c)m)^{m+p+q-1} \sum_{s=0}^{m-2} \sum_{t=0}^{m-2-s} \frac{1}{s! t!} \left(\frac{b}{b+c}\right)^{s+t} e^{-\frac{b}{b+c}(s+t+1)} (s+1)^{s-1}(t+1)^{t-1} \cdot \notag \\
&\cdot \left(1-\frac{(s+t)(s+t+3)}{2m}+\frac{b}{b+c}\frac{(s+t+1)(s+t+3)}{m}-\frac{b}{b+c}\frac{t(p+q+2)}{m}\right.\notag \\
& \hspace{6cm}\left.-\frac{b^2}{(b+c)^2}\frac{(s+t+1)^2}{2m}+\frac{p+q+2}{2(b+c)m}+O(m^{-2})\right).
\end{align}
The tree function $F(z) = \sum_{s=0}^\infty \frac{z^s}{s!}(s+1)^{s-1}$ is related to the Lambert $W$ function via $F(z)=-W(-z)/z$, and
satisfies $F(xe^{-x}) = e^x$. By the chain rule its first and second derivatives therefore satisfy
\begin{align}
F'(xe^{-x}) = \frac{e^{2x}}{1-x}, \hspace{1cm} F''(xe^{-x}) = \frac{3-2x}{(1-x)^3}e^{3x}.
\end{align}
We recognize that (\ref{before}) is in the form of a Cauchy product, and converges to
\begin{align}
&\frac{cm+b}{(p+1)(q+1)} ((b+c)m)^{m+p+q-1} e^{-\frac{b}{b+c}} \sum_{s=0}^{\infty} \sum_{t=0}^{\infty}\frac{1}{s! t!} \left(\frac{b}{b+c}e^{-\frac{b}{b+c}}\right)^{s+t} (s+1)^{s-1} (t+1)^{t-1} \cdot \notag \\
& \hspace{2cm} \cdot \left(1+\frac{1}{m} (A+Bs+Ct+Ds^2+Et^2+Fst)+O(m^{-2})\right),
\end{align}
where
\begin{equation*}
A=-\frac{b^2}{2(b+c)^2}+3\frac{b}{b+c}+\frac{p+q+2}{2(b+c)}, \hspace{.2cm} B+C=-2\frac{b^2}{(b+c)^2}+(6-p-q)\frac{b}{b+c}-3,
\end{equation*}
\begin{equation}
D+E=-\frac{b^2}{(b+c)^2}+2\frac{b}{b+c}-1, \hspace{.2cm} F=-\frac{b^2}{(b+c)^2}+2\frac{b}{b+c}-1.
\end{equation}
Using $F(z)$ this can be written as (with $z=\frac{b}{b+c}e^{-\frac{b}{b+c}}$):
\begin{align}
&\frac{cm+b}{(p+1)(q+1)} ((b+c)m)^{m+p+q-1} \cdot \notag \\
&\cdot \left[F(z)+\frac1m\left(AF(z)+(B+C)zF'(z)+(D+E)(z^2F''(z)+zF'(z))+Fz^2 F'(z) \frac{F'(z)}{F(z)}\right)+O(m^{-2})\right].
\end{align}
Dividing by $|\PF(a, b, m)|=(cm+b)((b+c)m+b)^{m-1}$ and simplifying we get
\begin{equation}\label{contri2}
\frac{((b+c)m)^{p+q}}{(p+1)(q+1)} \left(1+\frac{1}{c(b+c)m}\left(\frac{c(p+q+2)}{2}-b^2(p+q)\right)+O\left(\frac{1}{m^2}\right)\right)
\end{equation}
for the generic $(p, q)$-th mixed moment.

For the special case $p\geq 1$ and $q=0$, a similar asymptotic calculation gives the $p$-th moment as
\begin{equation}\label{contri2-special}
\frac{((b+c)m)^p}{p+1} \left(1+\frac{1}{c(b+c)m}\left(\frac{c(p+1)}{2}-b^2p\right)+O\left(\frac{1}{m^2}\right)\right).
\end{equation}
\end{proof}

\begin{proposition}\label{two-coordinates}
Take $m$ large. Take $b \geq 1$ any integer and $a=cm+b$ for some $c>0$. For parking function $\piR$ chosen uniformly at random from $\PF(a, b, m)$, we have
\begin{equation}
\Var(\pi_1) \sim \frac{((b+c)m)^2}{12}-\frac{b^2(b+c)m}{6c}, \hspace{1cm} \Cov(\pi_1, \pi_2) \sim -\frac{b^2(b+c)^2}{4c^2}.
\end{equation}
\end{proposition}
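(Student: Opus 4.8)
The plan is to read off both quantities from the moment asymptotics of Theorem~\ref{mean} through the identities $\Var(\pi_1)=\ER(\pi_1^2)-\ER(\pi_1)^2$ and $\Cov(\pi_1,\pi_2)=\ER(\pi_1\pi_2)-\ER(\pi_1)\ER(\pi_2)$, where permutation symmetry lets me replace $\ER(\pi_2)$ by $\ER(\pi_1)$. For the variance this is a direct subtraction of two instances of (\ref{moment1}); for the covariance the corresponding subtraction exhibits a double cancellation that leaves only a constant, and pinning down that constant is the crux of the argument.

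First I would treat the variance. Setting $p=2$ in (\ref{moment1}) gives
\[
\ER(\pi_1^2)=\frac{((b+c)m)^2}{3}\left(1+\frac{\tfrac{3c}{2}-2b^2}{c(b+c)m}+O(m^{-2})\right),
\]
while setting $p=1$ and squaring gives
\[
\ER(\pi_1)^2=\frac{((b+c)m)^2}{4}\left(1+\frac{2(c-b^2)}{c(b+c)m}+O(m^{-2})\right).
\]
Subtracting, the $m^2$-order parts combine as $\left(\tfrac13-\tfrac14\right)((b+c)m)^2=\tfrac{((b+c)m)^2}{12}$, and the $m$-order parts combine as $\frac{(b+c)m}{c}\left(\tfrac{c}{2}-\tfrac{2b^2}{3}-\tfrac{c}{2}+\tfrac{b^2}{2}\right)=-\frac{b^2(b+c)m}{6c}$. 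The leftover contributions arise from the $O(m^{-2})$ relative errors and are therefore of size $((b+c)m)^2\cdot m^{-2}=O(1)$, subdominant to the retained $O(m)$ term, which secures the stated expansion for $\Var(\pi_1)$.

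The covariance is the genuinely delicate case, and I expect it to be the main obstacle. With $p=q=1$, the relative $1/m$ coefficient in (\ref{moment2}) is $\frac{2(c-b^2)}{c(b+c)}$, exactly twice the relative $1/m$ coefficient $\frac{c-b^2}{c(b+c)}$ of $\ER(\pi_1)$; hence $\ER(\pi_1\pi_2)$ and $\ER(\pi_1)^2$ agree through relative order $1/m$, so in $\Cov(\pi_1,\pi_2)$ both the $O(m^2)$ and the $O(m)$ contributions cancel identically and only an $O(1)$ term survives. The difficulty is that this surviving constant $-\frac{b^2(b+c)^2}{4c^2}$ lives precisely at the relative order $m^{-2}$ that Theorem~\ref{mean} absorbs into its error term, so the two-term expansion recorded there does not determine it. To resolve this I would carry the analysis underlying Theorem~\ref{mean} one order further: extend Lemma~\ref{technical_lemma} to retain its $m^{-2}$ contribution and push the tree-function Cauchy product to the $1/m^2$ coefficients (which bring in higher derivatives of $F$), thereby obtaining the relative $m^{-2}$ coefficients $\beta$ of $\ER(\pi_1\pi_2)$ and $\beta'$ of $\ER(\pi_1)^2$ explicitly. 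Then
\[
\Cov(\pi_1,\pi_2)=\frac{(b+c)^2}{4}\,(\beta-\beta')+O(m^{-1}),
\]
and the remaining task is the verification $\beta-\beta'=-\tfrac{b^2}{c^2}$. The cleanest implementation forms the difference $\ER(\pi_1\pi_2)-\ER(\pi_1)\ER(\pi_2)$ at the level of the tree-function expressions before any truncation, so that the cancellations through order $1/m$ happen automatically and only the true $O(1)$ remainder must be simplified.
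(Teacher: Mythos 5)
Your variance computation is complete, correct, and needs nothing beyond Theorem \ref{mean}: the errors inherited from (\ref{moment1}) are of absolute size $((b+c)m)^2\cdot O(m^{-2})=O(1)$, subdominant to the retained $O(m)$ term, so the two-term expansion of $\Var(\pi_1)$ is secured. You also diagnose the covariance correctly: with $p=q=1$ the relative $1/m$ coefficient in (\ref{moment2}) is exactly twice that in (\ref{moment1}), so the $O(m^2)$ and $O(m)$ parts of $\ER(\pi_1\pi_2)-\ER(\pi_1)^2$ cancel and only a constant survives, a constant that the precision of Theorem \ref{mean} cannot resolve.

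The genuine gap is that this constant \emph{is} the assertion being proved, and your proposal never computes it: the argument ends with ``the remaining task is the verification $\beta-\beta'=-b^2/c^2$,'' which is precisely the crux deferred rather than performed. The paper's proof consists of exactly the computation you postpone. It redoes the expansion from the proof of Theorem \ref{mean} for $p=q=1$ keeping all relative $1/m^2$ terms (the coefficients $A_1,\dots,A_6$ and $B_1,\dots,B_{15}$ displayed in the proof, which involve monomials up to $s^4$, $s^3t$, $s^2t^2$, so the Cauchy-product step requires derivatives of the tree function $F$ beyond the second), and likewise for $\ER(\pi_1)$, arriving at
\begin{equation*}
\ER(\pi_1 \pi_2) \sim \frac{((b+c)m)^2}{4}+\frac{(b+c)m}{2c}(c-b^2)+\frac{2b^5+8b^4c+6b^3c^2-2b^2c^2+b^2c^3+c^3}{4c^3},
\end{equation*}
\begin{equation*}
\ER(\pi_1) \sim \frac{(b+c)m}{2}+\frac{c-b^2}{2c}+\frac{b^2(b^2+3bc+c^2)}{2c^3m},
\end{equation*}
after which the covariance constant follows by subtraction; note that the $1/m$ term of $\ER(\pi_1)$ is essential here, since it contributes at order one to $\ER(\pi_1)^2$. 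Your proposed route, if carried out, is this same route (so no step would fail), but as written the covariance half of the proposition remains unproved: nothing in the proposal establishes $\beta-\beta'=-b^2/c^2$, and there is no shortcut in the paper or in your outline that avoids the heavy bookkeeping of the $1/m^2$ coefficients.
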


\begin{proof}
Set $x=\frac{b}{b+c}$. For $p=q=1$, performing asymptotic expansion as in the proof of Theorem \ref{mean} but keeping more lower order terms, we have
\begin{align*}
\sum_{j=1}^{a+(m-1)b} \sum_{k=1}^{a+(m-1)b} jk \#\{\piR\in \PF(a, b, m): \pi_1=j, \pi_2=k\}
\end{align*}
converges to
\begin{align*}
&\frac{cm+b}{4} ((b+c)m)^{m+1}e^{-x} \sum_{s=0}^{\infty} \sum_{t=0}^{\infty} \frac{(xe^{-x})^{s+t}}{s! t!} (s+1)^{s-1} (t+1)^{t-1} \cdot \notag \\
&\cdot \left(1+\left(A_1+A_2s+A_3t+A_4s^2+A_5t^2+A_6st\right)\frac{1}{m}+\left(B_1+B_2s+B_3t+B_4s^2+B_5t^2+B_6st+ \right.\right. \notag \\
&\left.+B_7s^2t+B_8st^2+B_9s^3+B_{10}t^3+B_{11}s^3t+B_{12}st^3+B_{13}s^2t^2+B_{14}s^4+B_{15}t^4)\frac{1}{m^2} +O(m^{-3}) \right),
\end{align*}
where
\begin{equation*}
A_1=3x+\frac{2x}{b}-\frac{x^2}{2}, \hspace{.5cm} A_2+A_3=-3+4x-2x^2,
\end{equation*}
\begin{equation}
A_4+A_5=-1+2x-x^2, \hspace{.5cm} A_6=-1+2x-x^2.
\end{equation}
\vspace{.05cm}
\begin{equation*}
B_1=2x^2+\frac{x^2}{b^2}+\frac{6x^2}{b}-\frac{11x^3}{6}-\frac{x^3}{b}+\frac{x^4}{8}, \hspace{.5cm} B_2+B_3=-\frac{13}{6}-9x-\frac{6x}{b}+\frac{25x^2}{2}+\frac{10x^2}{b}-10x^3-\frac{4x^3}{b}+x^4 ,
\end{equation*}
\begin{equation*}
B_4+B_5=\frac{3}{4}-9x-\frac{2x}{b}+\frac{33x^2}{2}+\frac{4x^2}{b}-10x^3-\frac{2x^3}{b}+\frac{3x^4}{2},
\end{equation*}
\begin{equation*}
B_6=\frac{3}{4}-9x-\frac{2x}{b}+\frac{29x^2}{2}+\frac{4x^2}{b}-10x^3-\frac{2x^3}{b}+\frac{3x^4}{2},
\end{equation*}
\begin{equation*}
B_7+B_8=\frac{7}{2}-15x+\frac{45x^2}{2}-14x^3+3x^4, \hspace{.5cm} B_9+B_{10}=\frac{7}{6}-5x+\frac{15x^2}{2}-\frac{14x^3}{3}+x^4,
\end{equation*}
\begin{equation*}
B_{11}+B_{12}=1-4x+6x^2-4x^3+x^4, \hspace{.5cm} B_{13}=\frac{3}{4}-3x+\frac{9x^2}{2}-3x^3+\frac{3x^4}{4},
\end{equation*}
\begin{equation}
B_{14}+B_{15}=\frac{1}{4}-x+\frac{3x^2}{2}-x^3+\frac{x^4}{4}.
\end{equation}
A more involved application of the tree function method then yields
\begin{equation}\label{last2}
\ER(\pi_1 \pi_2) \sim \frac{((b+c)m)^2}{4}+\frac{(b+c)m}{2c}(c-b^2)+\frac{2b^5+8b^4c+6b^3c^2-2b^2c^2+b^2c^3+c^3}{4c^3}.
\end{equation}
The same approach also yields
\begin{equation}\label{last1}
\ER(\pi_1) \sim \frac{(b+c)m}{2}+\frac{c-b^2}{2c}+\frac{b^2(b^2+3bc+c^2)}{2c^3m}.
\end{equation}
The claimed asymptotics are then immediate.
\end{proof}

Extending the asymptotic expansion approach in the proof of Theorem \ref{mean}, we have the following more general result.

\begin{theorem}\label{general-mean}
Take $l\geq 1$ any integer and $m$ large. Take $b \geq 1$ any integer and $a=cm+b$ for some $c>0$. For $1\leq i\leq l$, take $p_i \geq 1$ any integer. For parking function $\piR$ chosen uniformly at random from $\PF(a, b, m)$, we have
\begin{equation}\label{general-moment}
\ER(\prod_{i=1}^l \pi_i^{p_i})=\frac{((b+c)m)^{\sum_{i=1}^l p_i}}{\prod_{i=1}^l (p_i+1)} \left(1+\frac{1}{c(b+c)m}\left(\frac{c\left(\sum_{i=1}^l p_i+l\right)}{2}-b^2\sum_{i=1}^l p_i\right)+O\left(\frac{1}{m^2}\right)\right).
\end{equation}
\end{theorem}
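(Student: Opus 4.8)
The plan is to run the multi-dimensional version of the computation in the proof of Theorem \ref{mean}, now carrying all $l$ coordinates at once. First I would write
\[
\ER\Big(\prod_{i=1}^l \pi_i^{p_i}\Big)=\frac{1}{|\PF(a,b,m)|}\sum_{j_1=1}^{u_m}\cdots\sum_{j_l=1}^{u_m}\Big(\prod_{i=1}^l j_i^{p_i}\Big)\,\#\{\piR\in\PF(a,b,m):\pi_i=j_i,\ 1\le i\le l\},
\]
and group the preferences into the width-$b$ blocks $u_{s_i}+1,\dots,u_{s_i+1}$, so that $\pi_i=u_{s_i+1}$ and the power-sum weights become $f_i(s_i)=\sum_{j=u_{s_i}+1}^{u_{s_i+1}} j^{p_i}$ exactly as in Lemma \ref{technical_lemma}. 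Just as in the case $l=2$, the count $\#\{\pi_i=u_{s_i+1}\}$ from Theorem \ref{component} depends only on the relative order of the block indices $s_1,\dots,s_l$, and the strata in which two or more of the $s_i$ coincide or are adjacent are governed by Proposition \ref{un}; all of this ordering bookkeeping is precisely what is packaged by the constrained sum on the left of Lemma \ref{technical_lemma}.

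Next I would insert the count from Theorem \ref{component} and collapse the Gon\v{c}arov factors using the Abel evaluation (\ref{gon}) together with the identity from the proof of Proposition \ref{un2}, mirroring (\ref{original})--(\ref{sub}). This rewrites the count as a sum over $l+1$ block sizes carrying one distinguished factor $(a+tb)^{t-1}$ (from the initial block, evaluated at $u_0=0$) and $l$ tree-type factors $(s_j+1)^{s_j-1}$, weighted by a single multinomial coefficient and a power of $b$. After the change of variables that isolates the distinguished block---which supplies the large power $((b+c)m)^{m+\sum_i p_i-1}$---from the $l$ remaining block sizes, I would replace the inner ordering sum of $\prod_i f_i(s_i)$ by its asymptotic value from Lemma \ref{technical_lemma}, namely $\frac{(cm+(\cdot)b)^{\sum_i p_i+l}}{\prod_i(p_i+1)}\big(1+\frac{1}{(b+c)m}\frac{\sum_i p_i+l}{2}+O(m^{-2})\big)$.

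What then remains is an $l$-dimensional Cauchy product in the $l$ tree-block sizes. Following (\ref{before}) and the displays after it, I would recognize it as a value of the tree function $F(z)=\sum_{s}\frac{z^s}{s!}(s+1)^{s-1}$ and its derivatives at $z=\frac{b}{b+c}e^{-b/(b+c)}$, where with $x=b/(b+c)$ one has $F(z)=e^{x}$, $F'(z)=e^{2x}/(1-x)$, and $F''(z)=\frac{3-2x}{(1-x)^3}e^{3x}$; the leading term contributes $F(z)^l$ against a prefactor $e^{-(l-1)x}$. Dividing by $|\PF(a,b,m)|=(cm+b)((b+c)m+b)^{m-1}$, whose $((b+c)m+b)^{m-1}$ carries its own factor $e^{x}$, cancels the residual exponentials and produces the leading term $\frac{((b+c)m)^{\sum_i p_i}}{\prod_i(p_i+1)}$. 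Assembling the order-$1/m$ contributions---from the Lemma \ref{technical_lemma} correction, from the expansion of $(a+tb)^{t-1}$ and of $((b+c)m+b)^{m-1}$, and from the $zF'(z)$, $z^2F''(z)$, and $(F'(z))^2/F(z)$ terms of the Cauchy product---should collapse to $\frac{1}{c(b+c)m}\big(\frac{c(\sum_i p_i+l)}{2}-b^2\sum_i p_i\big)$, giving (\ref{general-moment}).

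The main obstacle is controlling this order-$1/m$ term: the $l$-dimensional Cauchy product produces quadratic and cross contributions in the block sizes (the analogues of the coefficients $D+E$ and $F$ in the proof of Theorem \ref{mean}), and one must show that after processing by the derivative identities they combine with the prefactor expansion to leave only the stated remainder. A clean organizing principle, and a useful consistency check, is that the predicted correction is exactly additive across coordinates: $\frac{1}{c(b+c)}\big(\frac{c(\sum_i p_i+l)}{2}-b^2\sum_i p_i\big)=\sum_{i=1}^l \frac{1}{c(b+c)}\big(\frac{c(p_i+1)}{2}-b^2 p_i\big)$, so that $\ER(\prod_i\pi_i^{p_i})=\prod_i\ER(\pi_i^{p_i})\,(1+O(m^{-2}))$ and the coordinates decouple to this order---consistent with $\Cov(\pi_1,\pi_2)=O(1)$ against $\Var(\pi_1)\sim((b+c)m)^2/12$ from Proposition \ref{two-coordinates}. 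Thus the net contribution of the cross terms at order $1/m$ must vanish, and by the symmetry among the $l$ identically-structured tree-blocks this cancellation is the same one already verified for $l=2$, so no genuinely new phenomenon arises for larger $l$.
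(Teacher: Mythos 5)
Your reduction to the paper's machinery is exactly the paper's own route: break the first $l$ preferences into width-$b$ blocks, apply Theorem \ref{component} and Proposition \ref{un}, package the ordering constraints into Lemma \ref{technical_lemma}, evaluate the resulting $l$-fold Cauchy product with the tree function, and divide by $|\PF(a,b,m)|$. The gap is in your final paragraph, where the substance of the theorem --- that the order-$1/m$ contributions assemble, for general $l$, to $\frac{1}{c(b+c)}\bigl(\frac{c(\sum_{i=1}^l p_i+l)}{2}-b^2\sum_{i=1}^l p_i\bigr)$ --- is not verified but inferred circularly: you argue that because the \emph{predicted} correction is additive across coordinates, ``the net contribution of the cross terms at order $1/m$ must vanish.'' Additivity of the correction is the conclusion to be proven; it cannot be used to force cancellations inside the proof. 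Moreover, the asserted mechanism is false. Writing $x=b/(b+c)$ and $z=xe^{-x}$, the cross terms enter the Cauchy product as $D\binom{l}{2}z^2F'(z)^2/F(z)$ with $D=-(1-x)^2$ (the paper's coefficient), which evaluates to $-\binom{l}{2}x^2e^x$: it does \emph{not} vanish, and it is quadratic in $l$. What actually happens is that this term cancels against the quadratic-in-$l$ pieces of the constant and linear coefficients, $A=-\frac{x^2(l-1)^2}{2}+x(l^2-1)+\frac{x(\sum_i p_i+l)}{2b}$ and $Bl$ with $B=-x^2(l-1)+2xl-\frac{2l-1}{2}$: after normalizing by $F(z)=e^x$, the total $l^2$-coefficient is $\bigl(x-\frac{x^2}{2}\bigr)-x(1-x)-\frac{x^2}{2}=0$. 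Exhibiting this cancellation requires the explicit general-$l$ coefficients, which is precisely the computation the paper performs in (\ref{generic}) and the display following it.

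Your appeal to ``the same cancellation already verified for $l=2$'' cannot replace this computation. The intermediate coefficients depend on $l$ quadratically (through $(l-1)^2$, $l^2-1$, and $\binom{l}{2}$), so the claim that the final $1/m$-coefficient is \emph{linear} in $l$ is a polynomial identity of degree two in $l$; the cases $l=1$ and $l=2$ furnish only two data points and do not determine a quadratic. Likewise, Proposition \ref{two-coordinates} checks only $l=2$ with $p_1=p_2=1$, so the covariance consistency check, while reassuring, proves nothing for $l\geq 3$ or higher powers. To close the gap you must either carry out the general-$l$ bookkeeping of the coefficients $A$, $B$, $C$, $D$ as the paper does, or supply a genuinely structural, non-circular argument that the coordinates decouple to order $1/m$ for every $l$ and every choice of $p_1,\dots,p_l$; the symmetry appeal as written is not one.
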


\begin{proof}
We will not include all technical details, but as in the $l=1,2$ case, the key idea in the generic situation is still to break apart the parking preferences of the first $l$ cars into blocks. Then as in the proof of Theorem \ref{mean}, using Theorem \ref{component} and Proposition \ref{un} and interchanging the order of summation, we have
\begin{align}\label{general-asy}
&\sum \left(\prod_{i=1}^l \pi_i^{p_i}\right) \{\piR\in \PF(a, b, m): \pi_i \text{ specified } \forall i \in [l]\} \notag \\
&=a \sum_{s_1=0}^{m-l} \cdots \sum_{s_l=0}^{m-l-s_1-\cdots-s_{l-1}} \binom{m-l}{s_1, \dots, s_l, m-l-s_1-\cdots-s_l} b^{s_1+\cdots+s_l} \cdot \notag \\
&\cdot (a+(m-l-s_1-\cdots-s_l)b)^{m-l-1-s_1-\cdots-s_l} \prod_{i=1}^l (s_i+1)^{s_i-1} \left[\sum_{\substack{\#\{i: \hspace{.05cm} t_i \leq T_k\} \geq k \\ \forall k \in [l]}} \prod_{i=1}^l f_i(t_i)\right],
\end{align}
where $T_k=m-l+k-1-\sum_{j=k}^l s_j$ for $1\leq k \leq l$. Set $x=\frac{b}{b+c}$. By Lemma \ref{technical_lemma}, for $p_i\geq 1$, (\ref{general-asy}) is asymptotically
\begin{align}
&\frac{cm+b}{\prod_{i=1}^l (p_i+1)}((b+c)m)^{m-1+\sum_{i=1}^l p_i} \sum_{s_1=0}^{m-l} \cdots \sum_{s_l=0}^{m-l-s_1-\cdots-s_{l-1}} \frac{x^{\sum_{i=1}^l s_i}}{\prod_{i=1}^l s_i!} e^{-x\left(l-1+\sum_{i=1}^l s_i\right)} \prod_{i=1}^l (s_i+1)^{s_i-1} \cdot \notag \\
&\cdot \left(1-\frac{\left(\sum_{i=1}^l s_i\right)\left(2l-1+\sum_{i=1}^l s_i\right)}{2m}+\frac{x\left(l-1+\sum_{i=1}^l s_i\right)\left(l+1+\sum_{i=1}^l s_i\right)}{m}-\frac{xs_l\left(\sum_{i=1}^l p_i+l\right)}{m}\right.\notag \\
& \hspace{1.5cm}\left.-\frac{x^2\left(l-1+\sum_{i=1}^l s_i\right)^2}{2m}+\frac{x\left(\sum_{i=1}^l p_i+l\right)}{2bm}+O\left(m^{-2}\right)\right). \label{generic}
\end{align}

Denote by $F(z) = \sum_{s=0}^\infty \frac{z^s}{s!}(s+1)^{s-1}$ with $z=xe^{-x}$. An application of the tree function method shows that (\ref{generic}) converges to
\begin{align}
&\frac{cm+b}{\prod_{i=1}^l (p_i+1)} ((b+c)m)^{m-1+\sum_{i=1}^l p_i} \cdot \notag \\
&\cdot \Bigg[F(z)+\frac1m\left(AF(z)+\left(Bl-x\sum_{i=1}^l p_i-xl\right)zF'(z)+Cl(z^2F''(z)+zF'(z))+D\binom{l}{2}z^2 F'(z) \frac{F'(z)}{F(z)}\right)\notag \\
&\hspace{6cm}+O(m^{-2})\Bigg],
\end{align}
where
\begin{equation*}
A=-\frac{x^2(l-1)^2}{2}+x(l^2-1)+\frac{x\left(\sum_{i=1}^l p_i+l\right)}{2b}, \hspace{.5cm} B=-x^2(l-1)+2xl-\frac{2l-1}{2},
\end{equation*}
\begin{equation}
C=-\frac{x^2}{2}+x-\frac{1}{2}, \hspace{.5cm} D=-x^2+2x-1.
\end{equation}
Dividing by $|\PF(a, b, m)|=(cm+b)((b+c)m+b)^{m-1}$ and simplifying we get
\begin{equation}
\frac{((b+c)m)^{\sum_{i=1}^l p_i}}{\prod_{i=1}^l (p_i+1)} \left(1+\frac{1}{c(b+c)m}\left(\frac{c\left(\sum_{i=1}^l p_i+l\right)}{2}-b^2\sum_{i=1}^l p_i\right)+O\left(\frac{1}{m^2}\right)\right)
\end{equation}
for the generic mixed moment.
\end{proof}

\subsection{The special situation $c=0$}\label{special}
In this subsection we study the asymptotics of the generic mixed moments of $(b, b)$-parking functions of length $m$ via Abel's multinomial theorem. Indeed, the asymptotic moment calculations in Section \ref{generic-case} could as well be approached via Abel's multinomial theorem. Unlike the tree function method which fails for the case $c=0$ due to divergence, Abel's multinomial theorem applies broadly for $c\geq 0$. However calculation-wise it is in general more cumbersome to apply Abel's multinomial theorem as compared with the tree function method, so we only use this alternative approach when $c=0$ and so $a=b$.

\begin{theorem}[Abel's multinomial theorem, derived from Pitman \cite{Pitman} and Riordan \cite{Riordan}]\label{Abel}
Let
\begin{equation}\label{b}
A_n(x_1, \dots, x_m; p_1, \dots, p_m)=\sum_{\sR \models n} \binom{n}{\sR} \prod_{j=1}^m (x_j+s_j)^{s_j+p_j},
\end{equation}
where $\sR=(s_1, \dots, s_m)$ and $\sum_{i=1}^m s_i=n$.
Then
\begin{multline}\label{b1}
A_n(x_1, \dots, x_i, \dots, x_j, \dots, x_m; p_1, \dots, p_i, \dots, p_j, \dots, p_m)\\=A_n(x_1, \dots, x_j, \dots, x_i, \dots, x_m; p_1, \dots, p_j, \dots, p_i, \dots, p_m).
\end{multline}

\begin{multline}\label{b2}
A_n(x_1, \dots, x_m; p_1, \dots, p_m)\\=\sum_{i=1}^m A_{n-1}(x_1, \dots, x_{i-1}, x_i+1, x_{i+1}, \dots, x_m; p_1, \dots, p_{i-1}, p_i+1, p_{i+1}, \dots, p_m).
\end{multline}

\begin{equation}\label{b3}
A_n(x_1, \dots, x_m; p_1, \dots, p_m)=\sum_{s=0}^{n} \binom{n}{s}s!(x_1+s)A_{n-s}(x_1+s, x_2, \dots, x_m; p_1-1, p_2, \dots, p_m).
\end{equation}
Moreover, the following special instances hold via the basic recurrences listed above:
\begin{equation}\label{1}
A_n(x_1, \dots, x_m; -1, \dots, -1)=(x_1\cdots x_m)^{-1}(x_1+\cdots+x_m)(x_1+\cdots+x_m+n)^{n-1}.
\end{equation}

\begin{equation}\label{2}
A_n(x_1, \dots, x_m; -1, \dots, -1, 0)=(x_1\cdots x_m)^{-1}x_m(x_1+\cdots+x_m+n)^{n}.
\end{equation}
\end{theorem}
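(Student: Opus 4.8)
The plan is to take the symmetry relation (\ref{b1}) together with the recurrences (\ref{b2}) and (\ref{b3}) as the engine, and then deduce the closed forms (\ref{1}) and (\ref{2}) from them by a single induction on $n$. The symmetry (\ref{b1}) is immediate: interchanging $(x_i,p_i)$ with $(x_j,p_j)$ is undone by relabelling the summation indices $s_i\leftrightarrow s_j$, and both the multinomial coefficient $\binom{n}{\sR}$ and the product $\prod_j(x_j+s_j)^{s_j+p_j}$ are invariant under this simultaneous swap.

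For (\ref{b2}) I would expand the right-hand side, writing its $i$-th summand as a sum over compositions $(t_1,\dots,t_m)$ of $n-1$ and re-indexing via $s_j=t_j$ for $j\neq i$ and $s_i=t_i+1$ (so the inner compositions become compositions of $n$ with $s_i\geq 1$). The exponent bookkeeping matches because raising $p_i$ by one and replacing $x_i$ by $x_i+1$ exactly compensates the shift $t_i=s_i-1$, giving $(x_i+1+t_i)^{t_i+p_i+1}=(x_i+s_i)^{s_i+p_i}$. The only algebraic input is $\binom{n-1}{s_1,\dots,s_i-1,\dots,s_m}=\frac{s_i}{n}\binom{n}{\sR}$; summing over $i$ (the $s_i=0$ terms drop out automatically since they carry the factor $s_i/n$) and using $\sum_{i=1}^m s_i=n$ collapses the weight $\sum_i s_i/n$ to $1$ and recovers $A_n$.

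The recurrence (\ref{b3}) is the computational heart. I would peel off the first coordinate, writing $\binom{n}{\sR}=\binom{n}{s_1}\binom{n-s_1}{s_2,\dots,s_m}$, so that $A_n$ factors as $\sum_{s_1}\binom{n}{s_1}(x_1+s_1)^{s_1+p_1}$ times an $(m-1)$-variable block. On the right-hand side of (\ref{b3}) I would substitute $s_1=s+t_1$, observe that $\binom{n}{s}s!\binom{n-s}{t_1,s_2,\dots,s_m}=\frac{n!}{t_1!\,s_2!\cdots s_m!}$ and that $x_1+s+t_1=x_1+s_1$, so that after cancelling the common $(m-1)$-variable factor the claim reduces to the single-variable identity
\[
\frac{1}{s_1!}(x_1+s_1)^{s_1+p_1}=(x_1+s_1)^{p_1-1}\sum_{t=0}^{s_1}\frac{1}{t!}\,(x_1+s_1-t)\,(x_1+s_1)^{t}.
\]
Writing $y=x_1+s_1$ and $N=s_1$, this is exactly the telescoping identity $\sum_{t=0}^{N}\frac{(y-t)y^{t}}{t!}=\frac{y^{N+1}}{N!}$, which I would verify by splitting the summand and reindexing the $t\,y^{t}/t!$ piece. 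I expect this telescoping reduction to be the main obstacle, since one must correctly absorb the $s!(x_1+s)$ factor and the shift $p_1\to p_1-1$ before the cancellation becomes visible.

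Finally, I would establish (\ref{1}) and (\ref{2}) simultaneously by induction on $n$, the base case $n=0$ being immediate from $A_0=\prod_j x_j^{p_j}$. For the inductive step I would first derive (\ref{1}) at level $n$ from (\ref{2}) at level $n-1$: applying (\ref{b2}) to $A_n(\xR;-1,\dots,-1)$ gives $\sum_{i=1}^m A_{n-1}(\dots,x_i+1,\dots;\dots,0,\dots)$, and each summand is evaluated by (\ref{2}) (after using (\ref{b1}) to move the $0$ to the last slot) to $\tfrac{(X+n)^{n-1}}{\prod_{j\neq i}x_j}$ with $X=\sum_j x_j$; summing over $i$ and using $\sum_i\prod_{j\neq i}x_j^{-1}=X/\prod_j x_j$ yields (\ref{1}). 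I would then derive (\ref{2}) at level $n$ from (\ref{1}) at levels $\leq n$: using (\ref{b1}) to bring the $p=0$ coordinate to the front and applying (\ref{b3}) turns its exponent into $-1$, so the inner factors are all-$(-1)$ instances to which (\ref{1}) applies; the residual scalar sum is precisely the $m=1,\ p=0$ case of (\ref{b3}), namely $\sum_{s}\binom{n}{s}s!(X+s)(X+n)^{n-s-1}=(X+n)^{n}$, leaving $\tfrac{(X+n)^{n}}{\prod_{j<m}x_j}$. The delicate point is to order the two deductions within a fixed $n$ so that no step invokes an unproven case; the scheme in which (\ref{2}) at level $n-1$ yields (\ref{1}) at level $n$, which then yields (\ref{2}) at level $n$, arranges this cleanly.
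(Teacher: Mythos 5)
Your proof is correct, and it is more self-contained than the paper, which offers no proof at all: the theorem is imported from Pitman \cite{Pitman} and Riordan \cite{Riordan}, the recurrences \eqref{b1}--\eqref{b3} are stated without verification, and the closed forms \eqref{1} and \eqref{2} are only asserted to ``hold via the basic recurrences listed above.'' Your argument fills in exactly that outline. The individual steps all check: \eqref{b1} is index relabelling; for \eqref{b2} the substitution $s_i=t_i+1$ together with $\binom{n-1}{s_1,\dots,s_i-1,\dots,s_m}=\frac{s_i}{n}\binom{n}{\sR}$ and $\sum_i s_i=n$ collapses the weights to $1$ (the $s_i=0$ terms vanish, as you note); for \eqref{b3} the reduction to the single-variable identity $\sum_{t=0}^{N}\frac{(y-t)y^{t}}{t!}=\frac{y^{N+1}}{N!}$ with $y=x_1+s_1$, $N=s_1$ is the right bookkeeping, and that identity telescopes as claimed; and the interleaved induction is well-founded --- deriving \eqref{1} at level $n$ needs \eqref{2} only at level $n-1$ (after the cancellation of the factors $x_i+1$ one gets $\sum_i(X+n)^{n-1}/\prod_{j\neq i}x_j=X(X+n)^{n-1}/\prod_jx_j$, where $X=x_1+\cdots+x_m$), while deriving \eqref{2} at level $n$ needs \eqref{1} at levels $n-s\leq n$, the only delicate case being $s=0$, which invokes the just-established level-$n$ instance of \eqref{1}. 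Your closing observation that the residual scalar sum $\sum_{s=0}^{n}\binom{n}{s}s!\,(X+s)(X+n)^{n-s-1}=(X+n)^{n}$ is itself the $m=1$, $p_1=0$ case of \eqref{b3} is a clean way to finish without proving a separate Abel-type lemma. The only caveat worth recording is cosmetic: since negative exponents occur ($p_i=-1$), all of these should be read as identities of rational functions in $x_1,\dots,x_m$ (equivalently, as valid for generic values of the $x_j$), and every manipulation you perform is legitimate in that setting.
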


Take $l\geq 1$ any integer and $m$ large. Take $b \geq 1$ any integer and $a=cm+b$ for some $c \geq 0$. For $1\leq i\leq l$, take $p_i \geq 1$ any integer. In computing $\ER(\prod_{i=1}^l \pi_i^{p_i})$, we recognize from Lemma \ref{technical_lemma} that (\ref{general-asy}) is asymptotically
\begin{align}\label{leading}
&\frac{(cm+b)b^{m+\sum_{i=1}^l p_i-1}}{\prod_{i=1}^l (p_i+1)} \sum_{s_1=0}^{m-l} \cdots \sum_{s_l=0}^{m-l-s_1-\cdots-s_{l-1}} \binom{m-l}{s_1, \dots, s_l, m-l-s_1-\cdots-s_l} \cdot \notag \\
&\cdot (m-l-s_1-\cdots-s_l+1+\frac{cm}{b})^{m-l-1-s_1-\cdots-s_l} \prod_{i=1}^l (s_i+1)^{s_i-1} \cdot \notag \\
&\cdot (m-s_l+\frac{cm}{b})^{\sum_{i=1}^l p_i+l} \left(1+\frac{1}{(b+c)m-s_lb}\left(\frac{\sum_{i=1}^l p_i+l}{2}\right)+O\left(m^{-2}\right)\right).
\end{align}
Using Abel's multinomials, the leading order terms in (\ref{leading}) may be represented as
\begin{align}
&\frac{(cm+b)b^{m+\sum_{i=1}^l p_i-1}}{\prod_{i=1}^l (p_i+1)} \left(A_{m-l}(1+\frac{cm}{b}, \underbracket[0.5pt]{1, \dots, 1}_{l \hspace{.1cm} \text{1's}}; \sum_{i=1}^l p_i+l-1, \underbracket[0.5pt]{-1, \dots, -1}_{l \hspace{.1cm} \text{-1's}})\right.\notag \\
&+(l-1)\left(\sum_{i=1}^l p_i+l\right) A_{m-l}(1+\frac{cm}{b}, \underbracket[0.5pt]{1, \dots, 1}_{l \hspace{.1cm} \text{1's}}; \sum_{i=1}^l p_i+l-2, 0, \underbracket[0.5pt]{-1, \dots, -1}_{l-1 \hspace{.1cm} \text{-1's}})\notag \\
&\left.+\frac{1}{2b}\left(\sum_{i=1}^l p_i+l\right) A_{m-l}(1+\frac{cm}{b}, \underbracket[0.5pt]{1, \dots, 1}_{l \hspace{.1cm} \text{1's}}; \sum_{i=1}^l p_i+l-2, \underbracket[0.5pt]{-1, \dots, -1}_{l \hspace{.1cm} \text{-1's}})\right).
\end{align}
This is a general formula that works for any $a$, $b$, and $l$. When $c=0$ and so $a=b$, taking $l=1, 2$, we have
\begin{equation}
\ER(\pi_1) \sim b\left(\frac{m}{2}-\frac{\sqrt{2\pi}}{4}m^{1/2}+\frac{7}{6}\right)+\frac{1}{2}.
\end{equation}
\begin{equation}
\ER(\pi_1^2) \sim b^2\left(\frac{m^2}{3}-\frac{\sqrt{2\pi}}{4}m^{3/2}+\frac{4}{3}m\right)+\frac{b}{2}m.
\end{equation}
\begin{equation}
\ER(\pi_1 \pi_2) \sim b^2\left(\frac{m^2}{4}-\frac{\sqrt{2\pi}}{4}m^{3/2}+\frac{3}{2}m\right)+\frac{b}{2}m.
\end{equation}
These asymptotic results are in sharp contrast with the case $a=cm+b$ where $c>0$. As $c \rightarrow 0$, the correction terms in (\ref{last2}) (\ref{last1}) blow up, contributing to the different asymptotic orders between the generic situation $a \gtrsim b$ (corresponding to $c>0$) and the special situation $a=b$ (corresponding to $c=0$). Paralleling Proposition \ref{two-coordinates}, the following asymptotics are immediate.

\begin{proposition}\label{two-coordinates-special}
Take $m$ large. Take $b \geq 1$ any integer. For parking function $\piR$ chosen uniformly at random from $\PF(b, b, m)$, we have
\begin{equation}
\Var(\pi_1) \sim \frac{b^2}{12}m^2+\frac{b^2(4-3\pi)}{24}m, \hspace{1cm}
\Cov(\pi_1, \pi_2) \sim \frac{b^2(8-3\pi)}{24}m.
\end{equation}
\end{proposition}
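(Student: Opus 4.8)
The plan is to obtain both asymptotics directly from the three single- and mixed-moment expansions displayed immediately above, using the elementary identities $\Var(\pi_1)=\ER(\pi_1^2)-(\ER(\pi_1))^2$ and $\Cov(\pi_1,\pi_2)=\ER(\pi_1\pi_2)-\ER(\pi_1)\ER(\pi_2)$. By the permutation symmetry of the coordinates of a uniform $(b,b)$-parking function noted in Section~\ref{random}, we have $\ER(\pi_2)=\ER(\pi_1)$, so the covariance reduces to $\ER(\pi_1\pi_2)-(\ER(\pi_1))^2$, and both target quantities require nothing beyond the square of the single expansion $\ER(\pi_1)\sim b(\tfrac m2-\tfrac{\sqrt{2\pi}}4 m^{1/2}+\tfrac76)+\tfrac12$.

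First I would square $\ER(\pi_1)$, retaining every term that contributes through order $m$. Writing $\ER(\pi_1)\sim A+B+C$ with $A=\tfrac{bm}2$, $B=-\tfrac{b\sqrt{2\pi}}4 m^{1/2}$, and $C=\tfrac{7b}6+\tfrac12$, the expansion $(\ER(\pi_1))^2\sim A^2+2AB+(B^2+2AC)+\cdots$ produces an order-$m^2$ term $\tfrac{b^2m^2}4$, an order-$m^{3/2}$ term $-\tfrac{b^2\sqrt{2\pi}}4 m^{3/2}$, and an order-$m$ term equal to $B^2+2AC=(\tfrac{b^2\pi}8+\tfrac{7b^2}6+\tfrac b2)m$. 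Subtracting this from $\ER(\pi_1^2)\sim b^2(\tfrac{m^2}3-\tfrac{\sqrt{2\pi}}4 m^{3/2}+\tfrac43 m)+\tfrac b2 m$, I would check that the $m^2$ coefficients combine to $\tfrac13-\tfrac14=\tfrac1{12}$, that the $m^{3/2}$ terms cancel exactly, and that the surviving order-$m$ coefficient is $\tfrac{4b^2}3-\tfrac{7b^2}6-\tfrac{b^2\pi}8=b^2(\tfrac16-\tfrac\pi8)=\tfrac{b^2(4-3\pi)}{24}$, which is the claimed variance.

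The covariance is handled identically by subtracting the same $(\ER(\pi_1))^2$ from $\ER(\pi_1\pi_2)\sim b^2(\tfrac{m^2}4-\tfrac{\sqrt{2\pi}}4 m^{3/2}+\tfrac32 m)+\tfrac b2 m$; now both the $m^2$ and the $m^{3/2}$ terms cancel, and the surviving order-$m$ coefficient is $\tfrac{3b^2}2-\tfrac{7b^2}6-\tfrac{b^2\pi}8=b^2(\tfrac13-\tfrac\pi8)=\tfrac{b^2(8-3\pi)}{24}$, giving the claimed covariance.

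The only genuine subtlety — and the step I would be most careful about — is the bookkeeping of orders. Since the $m^2$ terms of $\ER(\pi_1^2)$ and $(\ER(\pi_1))^2$ differ while those of $\ER(\pi_1\pi_2)$ and $(\ER(\pi_1))^2$ coincide, and since in every case the $m^{3/2}$ terms must cancel, the genuinely new information lives entirely at order $m$. In particular the $\pi$ appearing in both answers originates solely from squaring the $m^{1/2}$ correction $-\tfrac{b\sqrt{2\pi}}4 m^{1/2}$ in $\ER(\pi_1)$, so it is essential that the single-coordinate expansion be carried through its $m^{1/2}$ term; dropping it would silently discard the entire $\pi$-dependence. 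Once the three expansions are taken at the stated precision, the conclusion is immediate.
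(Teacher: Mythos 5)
Your proposal is correct and follows exactly the paper's own route: the paper obtains Proposition \ref{two-coordinates-special} ``immediately'' from the three displayed expansions of $\ER(\pi_1)$, $\ER(\pi_1^2)$, and $\ER(\pi_1\pi_2)$ via the identities $\Var(\pi_1)=\ER(\pi_1^2)-(\ER(\pi_1))^2$ and $\Cov(\pi_1,\pi_2)=\ER(\pi_1\pi_2)-(\ER(\pi_1))^2$, just as you do. Your bookkeeping is also right --- the $m^{3/2}$ terms cancel, and the $\pi$-dependence indeed enters only through squaring the $-\tfrac{b\sqrt{2\pi}}{4}m^{1/2}$ correction --- so nothing is missing.
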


For $\piR=(\pi_1, \dots, \pi_m) \in \PF(a, b, m)$, the $(a, b)$-displacement $\disp^{(a, b)}(\piR)$ is defined as
\begin{equation}
\disp^{(a, b)}(\piR)=b\binom{m}{2}+am-(\pi_1+\cdots+\pi_m).
\end{equation}
Figure \ref{distribution2} shows a histogram of the displacement based on $100, 000$ random samples of $\PF(cm+b, b, m)$
for $m=100$ and $b=2$. The left plot ($c=1$) approximates a normal distribution and the right plot ($c=0$) approximates an Airy distribution. The displacement definition is in connection with the displacement enumerator of $(a, b)$-parking functions. Note that the set of $(a, b)$-parking functions of length $m$ is in bijection with the set of length-$a$ sequences of rooted $b$-forests on $m$ vertices, and there are related formulations for the $(a, b)$-inversion and inversion enumerator of length-$a$ sequences of rooted $b$-forests. See Yan \cite{Yan1} for more details.

\begin{theorem}\label{adapted}
Take $m$ large. Take $b \geq 1$ any integer and $a=cm+b$ for some $c>0$. For parking function $\piR$ chosen uniformly at random from $\PF(a, b, m)$, we have
\begin{equation}
\ER(\disp^{(a, b)}(\piR))\sim \frac{cm^2}{2}+\left(\frac{b^2}{2c}+\frac{b}{2}-\frac{1}{2}\right)m, \hspace{1cm} \Var(\disp^{(a, b)}(\piR))\sim \frac{(b+c)^2}{12} m^3.
\end{equation}
Contrarily, when $c=0$ and so $a=b$,
\begin{equation}
\ER(\disp^{(a, b)}(\piR))\sim b\frac{\sqrt{2\pi}}{4}m^{3/2}-\left(\frac{2b}{3}+\frac{1}{2}\right)m, \hspace{1cm} \Var(\disp^{(a, b)}(\piR))\sim \frac{b^2(10-3\pi)}{24} m^3.
\end{equation}
\end{theorem}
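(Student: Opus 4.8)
The plan is to exploit the fact that the $(a,b)$-displacement is an affine function of the coordinate sum $S := \pi_1 + \cdots + \pi_m$. Indeed, by definition $\disp^{(a,b)}(\piR) = b\binom{m}{2} + am - S$, so that $\ER(\disp^{(a,b)}(\piR)) = b\binom{m}{2} + am - \ER(S)$ while $\Var(\disp^{(a,b)}(\piR)) = \Var(S)$, the additive constant being immaterial to the variance. By the permutation symmetry of the coordinates noted in Section \ref{random}, each $\pi_i$ has the law of $\pi_1$ and each pair $(\pi_i,\pi_j)$ with $i \neq j$ has the law of $(\pi_1,\pi_2)$, whence $\ER(S) = m\,\ER(\pi_1)$ and $\Var(S) = m\,\Var(\pi_1) + m(m-1)\,\Cov(\pi_1,\pi_2)$. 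Thus the theorem reduces entirely to the single- and two-coordinate asymptotics already recorded, and the work is to assemble them while tracking the correct orders.

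For the generic regime $c>0$, I would substitute (\ref{last1}) into $\ER(S) = m\,\ER(\pi_1)$ and combine with $b\binom{m}{2} + am = (c + b/2)m^2 + (b/2)m$. The leading $\tfrac{b+c}{2}m^2$ contributions cancel against one another, leaving the announced $\tfrac{c}{2}m^2$, and the surviving $O(m)$ term assembles to $(\tfrac{b^2}{2c} + \tfrac{b}{2} - \tfrac12)m$; here one needs $\ER(\pi_1)$ only through its constant term, which (\ref{last1}) supplies. For the variance I would insert the estimates of Proposition \ref{two-coordinates}. Since $\Cov(\pi_1,\pi_2) = O(1)$ in this regime, the term $m(m-1)\Cov(\pi_1,\pi_2)$ is only $O(m^2)$ and hence negligible, so the leading $m^3$ behaviour comes solely from $m\,\Var(\pi_1) \sim \tfrac{(b+c)^2}{12}m^3$.

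For the special regime $c=0$, $a=b$, I would feed the expansion of $\ER(\pi_1)$ stated in Section \ref{special} into $\ER(S) = m\,\ER(\pi_1)$ and subtract from $b\binom{m}{2}+am = \tfrac{b}{2}m^2 + \tfrac{b}{2}m$. The $\tfrac{b}{2}m^2$ pieces cancel, promoting the $m^{1/2}$ term of $\ER(\pi_1)$ to the leading $b\tfrac{\sqrt{2\pi}}{4}m^{3/2}$, with the constant term of $\ER(\pi_1)$ producing the $-(\tfrac{2b}{3}+\tfrac12)m$ correction. The genuinely delicate point occurs in the variance: now $\Cov(\pi_1,\pi_2) \sim \tfrac{b^2(8-3\pi)}{24}m$ from Proposition \ref{two-coordinates-special} is of order $m$, so $m(m-1)\Cov(\pi_1,\pi_2) \sim \tfrac{b^2(8-3\pi)}{24}m^3$ contributes at the \emph{same} order $m^3$ as $m\,\Var(\pi_1) \sim \tfrac{b^2}{12}m^3$; adding the two (and writing $\tfrac1{12} = \tfrac{2}{24}$) yields $\tfrac{b^2(10-3\pi)}{24}m^3$.

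The only conceptual subtlety — and the step I would flag as the main pitfall rather than a hard obstacle — is precisely this difference in the role of the covariance between the two regimes: in the generic case the $\sim m^2$ off-diagonal pairs multiply an $O(1)$ covariance and drop out of the leading order, whereas in the special case they multiply an $O(m)$ covariance and must be retained. Everything else is bookkeeping built on the component moments of Theorem \ref{mean}, Proposition \ref{two-coordinates}, and Proposition \ref{two-coordinates-special}, so no new analytic input (tree function or Abel's theorem) is required beyond what those results already provide.
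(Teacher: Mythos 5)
Your proposal is correct and follows essentially the same route as the paper: reduce the displacement moments via permutation symmetry to $\ER(\disp^{(a,b)}(\piR)) = b\binom{m}{2} + am - m\,\ER(\pi_1)$ and $\Var(\disp^{(a,b)}(\piR)) = m\Var(\pi_1) + m(m-1)\Cov(\pi_1,\pi_2)$, then substitute the asymptotics of (\ref{last1}), Proposition \ref{two-coordinates}, the $c=0$ expansion of $\ER(\pi_1)$ in Section \ref{special}, and Proposition \ref{two-coordinates-special}. Your bookkeeping (including the observation that the covariance term is negligible when $c>0$ but contributes at order $m^3$ when $c=0$) is exactly the "combine the results" step the paper leaves implicit, and you even supply the factor $m$ on $\ER(\pi_1)$ that the paper's displayed mean formula omits.
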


\begin{proof}
By permutation symmetry,
\begin{equation}
\ER(\disp^{(a, b)}(\piR))=b\binom{m}{2}+am-\ER(\pi_1).
\end{equation}
\begin{equation}
\Var(\disp^{(a, b)}(\piR))=m\Var(\pi_1)+m(m-1)\Cov(\pi_1, \pi_2).
\end{equation}
We then combine the results from Propositions \ref{two-coordinates} and \ref{two-coordinates-special}.
\end{proof}

\begin{figure}
\begin{center}\includegraphics[width=6in]{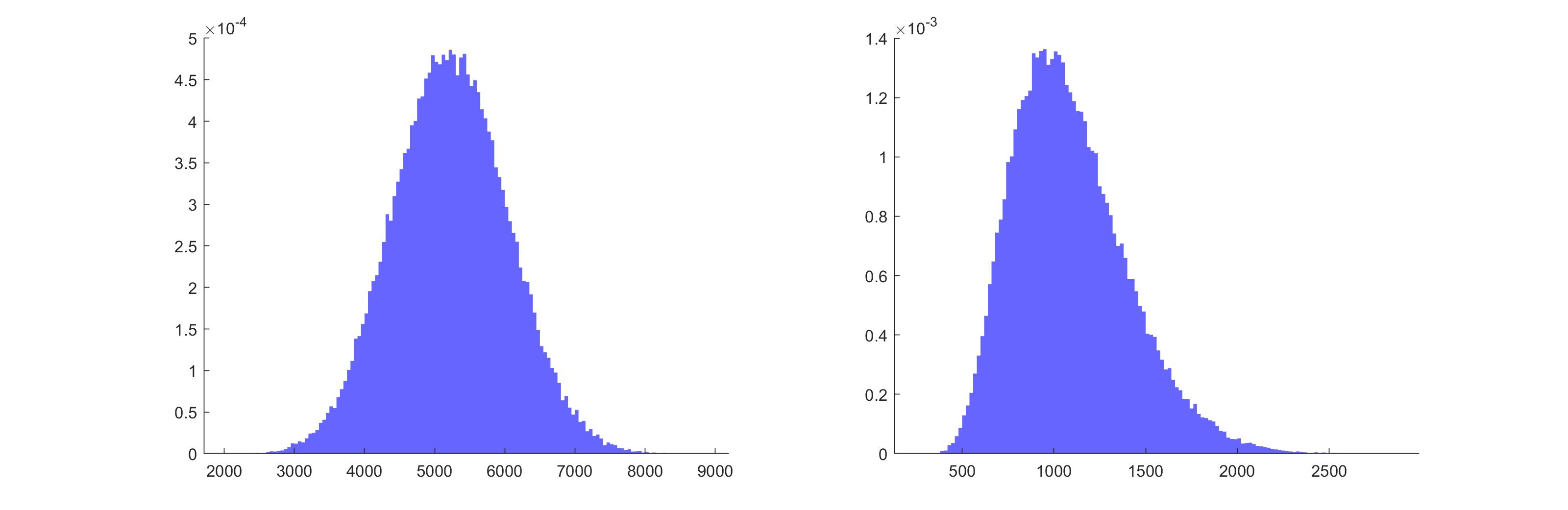}\end{center}
\caption{\label{distribution2}
The distribution of displacement in $100,000$ samples of $(cm+b, b)$-parking functions chosen uniformly at random, where $m=100$ and $b=2$. In the left plot $c=1$ and in the right plot $c=0$.}
\end{figure}

\subsection{Boundary behavior of a single coordinate}\label{section-boundary}
In this subsection we examine the boundary behavior of a single coordinate of $(a, b)$-parking functions of length $m$ when $a=cm+b$ for some $c \geq 0$. As in the case of the distribution of multiple coordinates, the asymptotic tendency in the generic situation $c>0$ and the special situation $c=0$ are strikingly different. Calculational techniques (tree function, Abel's multinomial theorem) employed in Sections \ref{generic-case} and \ref{special} will be used in our investigation, with details omitted.
\begin{proposition}\label{1st}
Let $1\leq j\leq a+(m-1)b$. The number of parking functions $\piR \in \PF(a, b, m)$ with $\pi_1=j$ is
\begin{equation}
ab \sum_{s: \hspace{.05cm} a+sb \geq j} \binom{m-1}{s} (a+sb)^{s-1} ((m-s)b)^{m-2-s}.
\end{equation}
Note that this quantity stays constant for $j \leq a$ and decreases as $j$ increases past $a$ as there are fewer resulting summands.
\end{proposition}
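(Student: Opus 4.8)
The plan is to specialize Theorem \ref{component} to the case $l=1$ with $\wR=(j)$ and then evaluate the two resulting Gon\v{c}arov polynomials explicitly using the Abel polynomial formula (\ref{gon}) together with shift invariance. With $l=1$ the index set $S_1((j))$ consists of pairs $\sR=(s_1,s_2)$ with $s_1+s_2=m-1$ subject to the single constraint $u_{s_1+1}=a+s_1 b\geq j$, so the double product collapses to one sum over $s:=s_1$ ranging over those values with $a+sb\geq j$. This already accounts for the stated boundary behavior: when $j\leq a$ the inequality $a+sb\geq j$ holds for every $0\leq s\leq m-1$, so all summands survive and the count is independent of $j$, whereas once $j$ exceeds $a$ the smallest values of $s$ drop out and the number of summands strictly decreases.

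The key step is to identify the two Gon\v{c}arov factors. For $i=1$ the factor is $g_{s}(0;u_1,\dots,u_{s})=g_{s}(0;a,a+b,\dots,a+(s-1)b)$, which by (\ref{gon}) equals $(-a)(-(a+sb))^{s-1}=(-1)^{s}\,a\,(a+sb)^{s-1}$. For $i=2$ the factor is $g_{s_2}(u_{s+1};u_{s+2},\dots,u_{m})$, an arithmetic-progression Gon\v{c}arov polynomial with first node $a+(s+1)b$, common difference $b$, and length $s_2=m-1-s$; applying (\ref{gon}) (or, equivalently, shifting by $-(a+sb)$ first) gives $(-b)(-(s_2+1)b)^{s_2-1}$, and substituting $s_2=m-1-s$ turns this into $(-1)^{m-1-s}\,b^{\,m-1-s}\,(m-s)^{m-s-2}$.

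I would then assemble these pieces against the prefactor $(-1)^{m-1}$ and the multinomial coefficient $\binom{m-1}{\sR}=\binom{m-1}{s}$ supplied by Theorem \ref{component}. The three sign contributions $(-1)^{m-1}$, $(-1)^{s}$, and $(-1)^{m-1-s}$ multiply to $+1$, and collecting the surviving factors yields
\begin{equation*}
ab\sum_{s:\,a+sb\geq j}\binom{m-1}{s}(a+sb)^{s-1}\big((m-s)b\big)^{m-2-s},
\end{equation*}
after rewriting $a(a+sb)^{s-1}b^{m-1-s}(m-s)^{m-s-2}$ in the displayed form and using $m-s-2=m-2-s$. As a consistency check, restricting $j$ to the achievable values $j=a+kb$ forces the constraint to $s\geq k$, and the substitution $s\mapsto m-1-s$ recovers exactly the $l=1$ instance of Proposition \ref{un}; conceptually this matches Theorem \ref{main1}, since $\pi_1=j$ is feasible precisely when the maximal compatible first coordinate $v_1=u_{k_1}$ satisfies $u_{k_1}\geq j$.

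The only genuinely delicate point is the bookkeeping for the second Gon\v{c}arov factor: one must track the node indices $u_{s+2},\dots,u_{m}$ correctly so that the progression fed into (\ref{gon}) has the right first term $a+(s+1)b$ and length $s_2=m-1-s$, and then verify that the evaluation point $u_{s+1}=a+sb$ sits exactly one step $b$ below the first node, producing the factor $\big(-(s_2+1)b\big)^{s_2-1}$ rather than $(-s_2 b)^{s_2-1}$. Once the shift-invariance reduction is set up carefully, the remaining algebra and the sign cancellation are routine.
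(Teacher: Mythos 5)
Your proposal is correct and follows essentially the same route as the paper's own proof: specialize Theorem \ref{component} to $l=1$ with $\wR=(j)$, evaluate both Gon\v{c}arov factors via the Abel formula (\ref{gon}), and observe that the signs $(-1)^{m-1}$, $(-1)^{s}$, $(-1)^{m-1-s}$ cancel to give the stated sum, with the constraint $a+sb\geq j$ explaining the boundary behavior. Your bookkeeping for the second factor (evaluation point one step $b$ below the first node, yielding $(-b)(-(s_2+1)b)^{s_2-1}$) matches the paper's computation exactly.
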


\begin{proof}
From Theorem \ref{component}, the number of $(a, b)$-parking functions of length $m$ with $\pi_1=j$ is
\begin{align}
&(-1)^{m-1} \sum_{s: \hspace{.05cm} a+sb \geq j} \binom{m-1}{s} g_s\left(0; a, \dots, a+(s-1)b\right) g_{m-1-s}\left(a+sb; a+(s+1)b, \dots, a+(m-1)b\right) \notag \\
&=(-1)^{m-1} \sum_{s: \hspace{.05cm} a+sb \geq j} \binom{m-1}{s} (-a)(-a-sb)^{s-1} (-b)(-b-(m-1-s)b)^{m-2-s} \notag \\
&=ab \sum_{s: \hspace{.05cm} a+sb \geq j} \binom{m-1}{s} (a+sb)^{s-1} ((m-s)b)^{m-2-s},
\end{align}
where we used (\ref{gon}) in the first equality.
\end{proof}

Recall from Proposition \ref{nonempty} that if $A_{\pi_2, \dots, \pi_m}=[v]$ is non-empty, then $v=a+sb$ for some $0\leq s\leq m-1$. Let $X$ be a random variable satisfying the Borel distribution with parameter $\mu$ ($0\leq \mu\leq 1$), that is, with pmf given by, for $j=1, 2, \dots$,
\begin{equation}
\PR_\mu(X=j)=\frac{e^{-\mu j}(\mu j)^{j-1}}{j!}.
\end{equation}
Denote by $\QR_\mu(j)=\PR_\mu(X\geq j)$. We refer to Stanley \cite{Stanley} for some nice properties of this discrete distribution.

\begin{corollary}\label{boundary2}
Fix $j$ and take $m$ large relative to $j$. Take $b\geq 1$ any integer and $a=cm+b$ for some $c \geq 0$. For parking function $\piR$ chosen uniformly at random from $\PF(a, b, m)$, we have
\begin{equation}
\PR(\pi_1=a+(m-1)b-j)\sim \frac{1-\QR_{b/(b+c)}(\lfloor j/b \rfloor+2)}{(b+c)m},
\end{equation}
where $\QR_{b/(b+c)}(l) = \PR_{b/(b+c)}(X\geq l)$ is the tail distribution function of Borel-$b/(b+c)$ and $\lfloor x \rfloor$ denotes the largest integer $\leq x$.
\end{corollary}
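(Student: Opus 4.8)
The plan is to start from the exact count in Proposition \ref{1st}, specialize the coordinate value to $j' = a + (m-1)b - j$, reindex the sum so that it becomes a finite sum whose length is fixed (independent of $m$), divide by the known cardinality $|\PF(a,b,m)| = a(a+mb)^{m-1}$, and then carry out a term-by-term asymptotic analysis as $m\to\infty$ with $j$ held fixed.

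First I would invoke Proposition \ref{1st}: the number of $\piR \in \PF(a,b,m)$ with $\pi_1 = j'$ equals $ab \sum_{s:\, a+sb\geq j'} \binom{m-1}{s}(a+sb)^{s-1}((m-s)b)^{m-2-s}$. Setting $j' = a+(m-1)b-j$, the constraint $a+sb\geq j'$ reads $s \geq m-1-j/b$, i.e. $s \in \{m-1-\lfloor j/b\rfloor, \dots, m-1\}$ once $m$ is large relative to $j$ (so the lower index is nonnegative). I would then substitute $r = m-1-s$, so that $r$ runs over the fixed range $0 \leq r \leq \lfloor j/b\rfloor$. Under this substitution $\binom{m-1}{s} = \binom{m-1}{r}$, while $a+sb = (b+c)m - rb$, $(a+sb)^{s-1} = ((b+c)m-rb)^{m-2-r}$, and $((m-s)b)^{m-2-s} = ((r+1)b)^{r-1}$; using $a+mb = (b+c)m+b$ and cancelling the factor $a$, this yields the exact identity
\[
\PR(\pi_1 = a+(m-1)b-j) = \frac{b \sum_{r=0}^{\lfloor j/b\rfloor} \binom{m-1}{r}\big((b+c)m - rb\big)^{m-2-r}\big((r+1)b\big)^{r-1}}{\big((b+c)m+b\big)^{m-1}}.
\]

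The heart of the argument is the term-by-term asymptotics. For each fixed $r$ I would use $\binom{m-1}{r} \sim m^r/r!$, factor out $(b+c)m$ via $((b+c)m-rb)^{m-2-r} = ((b+c)m)^{m-2-r}(1 - \tfrac{rb}{(b+c)m})^{m-2-r}$ and $((b+c)m+b)^{m-1} = ((b+c)m)^{m-1}(1 + \tfrac{b}{(b+c)m})^{m-1}$, and apply the standard limits $(1 - \tfrac{rb}{(b+c)m})^{m-2-r}\to e^{-rb/(b+c)}$ and $(1 + \tfrac{b}{(b+c)m})^{m-1}\to e^{b/(b+c)}$. After cancelling the powers of $m$ and of $(b+c)$ and collecting the exponentials, writing $\mu = b/(b+c)$ and using $b = \mu(b+c)$, the $r$-th term collapses to $\frac{1}{(b+c)m}\cdot\frac{\mu^r (r+1)^{r-1}e^{-(r+1)\mu}}{r!}$.

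The final step is recognition: substituting $k = r+1$, one checks the identity $\frac{\mu^{k-1}k^{k-2}e^{-k\mu}}{(k-1)!} = \frac{e^{-\mu k}(\mu k)^{k-1}}{k!} = \PR_\mu(X=k)$, so each summand is exactly a Borel-$\mu$ point mass. Summing over $k$ from $1$ to $\lfloor j/b\rfloor+1$ gives $\PR_\mu(X \leq \lfloor j/b\rfloor+1) = 1 - \QR_\mu(\lfloor j/b\rfloor+2)$, which is the asserted asymptotic. I expect the only genuine obstacle to be technical rather than structural, namely justifying that the term-by-term limits assemble into the leading-order asymptotic of the whole quotient. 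This is easy here because $j$ is fixed, so the sum has at most $\lfloor j/b\rfloor+1$ terms and each converges to its stated limit, requiring no uniform control over an unbounded family of error terms. The case $c=0$ (where $\mu=1$ and the masses become Borel-$1$) presents no additional difficulty, since the exponential limits $(1\pm\tfrac{\cdot}{m})^m \to e^{\pm\cdot}$ are unaffected and the prefactor $(b+c)m$ simply becomes $bm$.
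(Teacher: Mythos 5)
Your proposal is correct and follows essentially the same route as the paper's own proof: both start from Proposition \ref{1st}, observe that the constraint $a+sb\geq a+(m-1)b-j$ restricts the sum to a bounded number of terms (indexed by $r=m-1-s\leq\lfloor j/b\rfloor$), carry out term-by-term asymptotics, and recognize each limit as a Borel-$b/(b+c)$ point mass. The only (cosmetic) difference is that the paper splits off the boundary term $\PR(\pi_1=a+(m-1)b)\sim(1-\QR_{b/(b+c)}(2))/((b+c)m)$ and treats the remaining terms as a telescoping difference of tail probabilities, whereas you treat all $\lfloor j/b\rfloor+1$ terms uniformly and sum the point masses directly.
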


\begin{proof}
If $\pi_1=a+(m-1)b-j$, then $A_{\pi_2, \dots, \pi_m}=[a+sb]$ for some $s$ so that $sb\geq (m-1)b-j$. From Proposition \ref{1st}, this implies that
\begin{align}
&\PR(\pi_1=a+(m-1)b-j)=\PR(\pi_1=a+(m-1)b)+\sum_{s=m-1-\lfloor j/b\rfloor}^{m-2}\PR(A_{\pi_2, \dots, \pi_m}=[a+sb]) \notag \\
&=\PR(\pi_1=a+(m-1)b-j)+\sum_{s=m-1-\lfloor j/b\rfloor}^{m-2} \frac{b \binom{m-1}{m-1-s} (a+sb)^{s-1} ((m-s)b)^{m-2-s}}{(a+mb)^{m-1}} \notag \\
& \sim \PR(\pi_1=a+(m-1)b-j)+\frac{1}{(b+c)m}\left(\QR_{b/(b+c)}(2)-\QR_{b/(b+c)}(\lfloor j/b \rfloor+2)\right),
\end{align}
where we use that $j$ (and hence $m-1-s$) is small relative to $m$.

Hence we only need to check the boundary case:
\begin{equation}
\PR(\pi_1=a+(m-1)b)=\frac{(a+(m-1)b)^{m-2}}{(a+mb)^{m-1}}\sim \frac{1}{(b+c)me^{b/(b+c)}}=\frac{\PR_{b/(b+c)}(X=1)}{(b+c)m}=\frac{1-\QR_{b/(b+c)}(2)}{(b+c)m}.
\end{equation}
\end{proof}

\begin{corollary}\label{boundary1}
Fix $j$ and take $m$ large relative to $j$. Take $b\geq 1$ any integer and $a=cm+b$ for some $c>0$. For parking function $\piR$ chosen uniformly at random from $\PF(a, b, m)$, we have
\begin{equation}
\PR(\pi_1=1)=\cdots=\PR(\pi_1=a) \sim \frac{1}{(b+c)m},
\end{equation}
and
\begin{equation}
\PR(\pi_1=a+j)\sim e^{\frac{cm}{be}-\frac{b}{b+c}} \left(\frac{b}{b+c}\right)^{m-1} \frac{1}{cm^2}\left(\PR(Y\geq \lceil j/b\rceil)-1\right)+\frac{1}{(b+c)m},
\end{equation}
where $Y$ is a Poisson$((cm)/(be))$ random variable and $\lceil x \rceil$ denotes the smallest integer $\geq x$.
\end{corollary}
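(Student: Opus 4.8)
The plan is to start from the exact enumeration in Proposition \ref{1st}, divide by $|\PF(a,b,m)| = a(a+mb)^{m-1}$, and split into the two regimes $j\leq a$ and $j>a$ according to where the summation constraint $a+sb\geq j$ begins to bind.

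For the first claim, when $j\leq a$ the constraint $a+sb\geq j$ holds for every $s\geq 0$, so the sum in Proposition \ref{1st} runs over all $s$ from $0$ to $m-1$ and is therefore independent of $j$; this already gives $\PR(\pi_1=1)=\cdots=\PR(\pi_1=a)$. To evaluate the common value I would recognize the full sum as an instance of Abel's multinomial identity (\ref{1}). Writing $(a+sb)^{s-1}=b^{s-1}(a/b+s)^{s-1}$ and $((m-s)b)^{m-2-s}=b^{m-2-s}(m-s)^{m-2-s}$ and setting $x_1=a/b$, $x_2=1$, $n=m-1$, the sum equals $b^{m-3}A_{m-1}(a/b,1;-1,-1)$, which by (\ref{1}) is $\frac{a+b}{a}\cdot\frac{(a+mb)^{m-2}}{b}$. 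Dividing the resulting count by $a(a+mb)^{m-1}$ collapses to the clean closed form $\PR(\pi_1=j)=\frac{a+b}{a(a+mb)}$; substituting $a=cm+b$ and $a+mb=(b+c)m+b$ and letting $m\to\infty$ yields the stated $\frac{1}{(b+c)m}$.

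For the second claim I would write $\PR(\pi_1=a+j)$ as the constant value minus a truncated tail: since $\pi_1=a+j$ forces $s\geq\lceil j/b\rceil$, we have $\PR(\pi_1=a+j)=\PR(\pi_1=a)-\Delta$, where $\Delta=\sum_{s=0}^{\lceil j/b\rceil-1}\frac{b\binom{m-1}{s}(a+sb)^{s-1}((m-s)b)^{m-2-s}}{(a+mb)^{m-1}}$ is a fixed, finite collection of small-$s$ terms. Each such term I would expand by standard asymptotics with $s$ fixed and $m$ large: $\binom{m-1}{s}\sim m^s/s!$, $(a+sb)^{s-1}\sim(cm)^{s-1}$, $(1-s/m)^{m-2-s}\to e^{-s}$, and $(1+b/((b+c)m))^{m-1}\to e^{b/(b+c)}$. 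Collecting powers of $b$ and $b+c$, the $s$-th term becomes $\sim\frac{1}{c}\left(\frac{c}{be}\right)^{s}\frac{m^{s-2}}{s!}e^{-b/(b+c)}\left(\frac{b}{b+c}\right)^{m-1}$. Summing over $s$ and factoring out $\frac{1}{cm^2}$ turns $\Delta$ into $e^{-b/(b+c)}\left(\frac{b}{b+c}\right)^{m-1}\frac{1}{cm^2}\sum_{s=0}^{\lceil j/b\rceil-1}\frac{(cm/(be))^s}{s!}$, which I would recognize as $e^{-b/(b+c)}\left(\frac{b}{b+c}\right)^{m-1}\frac{1}{cm^2}e^{cm/(be)}\PR(Y\leq\lceil j/b\rceil-1)$ for $Y\sim\text{Poisson}(cm/(be))$. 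Writing $\PR(Y\leq\lceil j/b\rceil-1)=1-\PR(Y\geq\lceil j/b\rceil)$ and combining with $\PR(\pi_1=a)\sim\frac{1}{(b+c)m}$ reproduces the claimed expression.

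The main obstacle will be the asymptotic bookkeeping in the second claim, in particular the fact that two different scales are tracked at once: the leading contribution $\frac{1}{(b+c)m}$ is polynomially small in $m$, whereas the Poisson correction $\Delta$ carries the exponentially small prefactor $e^{cm/(be)}\left(\frac{b}{b+c}\right)^{m-1}$, which is genuinely exponentially small since $\frac{1}{e}<1$. Care is needed to state precisely in what sense the displayed formula is an asymptotic equivalence, to confirm that the $m^{s-2}$ growth makes the largest index $s=\lceil j/b\rceil-1$ the dominant term of $\Delta$, and to verify that the relative $O(1/m)$ corrections dropped in each factor do not disturb the leading coefficient of the Poisson sum. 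As the paper indicates, these are the same tree-function and Abel-multinomial techniques developed in Sections \ref{generic-case} and \ref{special}, so once the decomposition $\PR(\pi_1=a+j)=\PR(\pi_1=a)-\Delta$ and the Poisson identification are in place the remaining estimates are routine.
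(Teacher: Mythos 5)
Your proposal is correct, and for the second claim it is essentially the paper's own argument: the same decomposition $\PR(\pi_1=a+j)=\PR(\pi_1=a)-\Delta$, where $\Delta=\sum_{s=0}^{\lceil j/b\rceil-1}\PR(A_{\pi_2,\dots,\pi_m}=[a+sb])$ is a finite sum of small-$s$ terms drawn from Proposition \ref{1st}, the same fixed-$s$ asymptotics for each term, and the same recognition of $\sum_{s=0}^{\lceil j/b\rceil-1}\frac{(cm/(be))^s}{s!}$ as $e^{cm/(be)}\,\PR(Y<\lceil j/b\rceil)$. Where you genuinely differ is the first claim: the paper evaluates the common value $\PR(\pi_1=a)$ asymptotically by the tree function method, whereas you close the full sum exactly via Abel's identity (\ref{1}) in two variables, $b^{m-3}A_{m-1}(a/b,1;-1,-1)=\frac{a+b}{a}\cdot\frac{(a+mb)^{m-2}}{b}$, so that the count is $(a+b)(a+mb)^{m-2}$ and $\PR(\pi_1=j)=\frac{a+b}{a(a+mb)}$ exactly for every $j\leq a$. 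This is a modest but real improvement, replacing an asymptotic evaluation by a closed form from which $\frac{1}{(b+c)m}$ follows immediately upon substituting $a=cm+b$; both tools are explicitly within the toolkit the paper invokes in Section \ref{section-boundary}.

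One caveat on a side remark, which your derivation fortunately never uses: the claim that the prefactor $e^{cm/(be)}\left(\frac{b}{b+c}\right)^{m-1}$ is ``genuinely exponentially small since $\frac{1}{e}<1$'' is false in general. Its logarithm grows like $m\left(\frac{c}{be}-\ln\left(1+\frac{c}{b}\right)\right)$, and with $t=c/b$ the quantity $\frac{t}{e}-\ln(1+t)$ changes sign at the unique positive root $t^*\approx 4.75$ of $\ln(1+t)=t/e$; for $c>t^*b$ the prefactor is exponentially \emph{large}. What is exponentially small is $\Delta$ itself, i.e., the prefactor paired with $\PR(Y<\lceil j/b\rceil)$: each of the finitely many terms of $\Delta$ is of order $m^{s-2}\left(\frac{b}{b+c}\right)^{m-1}$, and it is only this pairing that justifies reading the displayed formula as ``constant main term plus exponentially small signed correction,'' which is also the sense in which the paper's ``$\sim$'' must be interpreted. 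Repair that justification and the two-scale bookkeeping you flag as the main obstacle goes through exactly as you outline.
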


\begin{proof}
If $\pi_1=a+j$, then $A_{\pi_2, \dots, \pi_m}=[a+sb]$ for some $s$ so that $sb\geq j$. From Proposition \ref{1st}, this implies that
\begin{align}
&\PR(\pi_1=a+j)=\PR(\pi_1=a)-\sum_{s=0}^{\lceil j/b\rceil-1}\PR(A_{\pi_2, \dots, \pi_m}=[a+sb]) \notag \\
&=\PR(\pi_1=a)-\sum_{s=0}^{\lceil j/b\rceil-1} \frac{b \binom{m-1}{s} (a+sb)^{s-1} ((m-s)b)^{m-2-s}}{(a+mb)^{m-1}} \notag \\
& \sim \PR(\pi_1=a)-e^{\frac{cm}{be}-\frac{b}{b+c}} \left(\frac{b}{b+c}\right)^{m-1} \frac{1}{cm^2} \PR(Y<\lceil j/b\rceil),
\end{align}
where we use that $j$ (and hence $s$) is small relative to $m$.

Hence we only need to check the boundary case:
\begin{multline}
\PR(\pi_1=1)=\cdots=\PR(\pi_1=a)=\\ \frac{b}{(a+mb)^{m-1}}\sum_{s=0}^{m-1} \binom{m-1}{s} (a+sb)^{s-1} ((m-s)b)^{m-2-s} \sim \frac{1}{(b+c)m},
\end{multline}
where we apply the tree function method in the asymptotics.
\end{proof}

\begin{corollary}\label{boundary1-special}
Fix $j$ and take $m$ large relative to $j$. Take $b\geq 1$ any integer. For parking function $\piR$ chosen uniformly at random from $\PF(b, b, m)$, we have
\begin{equation}
\PR(\pi_1=1)=\cdots=\PR(\pi_1=b) \sim \frac{2}{bm},
\end{equation}
and
\begin{equation}
\PR(\pi_1=b+j)\sim \frac{1+\QR_1(\lceil j/b\rceil+1)}{bm},
\end{equation}
where $\QR_{1}(l) = \PR_{1}(X\geq l)$ is the tail distribution function of Borel-$1$ and $\lceil x \rceil$ denotes the smallest integer $\geq x$.
\end{corollary}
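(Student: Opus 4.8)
The plan is to mirror the proof of Corollary \ref{boundary1}, specializing to $a=b$ (i.e. $c=0$), but replacing the tree function method—which diverges at $c=0$—with Abel's multinomial theorem (Theorem \ref{Abel}). The starting point is Proposition \ref{1st}: with $a=b$ the number of $\piR\in\PF(b,b,m)$ with $\pi_1=j$ is $b^2\sum_{s:\,b+sb\geq j}\binom{m-1}{s}(b(1+s))^{s-1}(b(m-s))^{m-2-s}$, against the normalization $|\PF(b,b,m)|=b^m(m+1)^{m-1}$.

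For the boundary (constant) region $1\leq j\leq b$ every summand is present, so I would first pull out the powers of $b$ to reduce the count to $b^{m-1}\sum_{s=0}^{m-1}\binom{m-1}{s}(1+s)^{s-1}(m-s)^{m-2-s}$. Setting $s_1=s$, $s_2=m-1-s$ and using $(m-s)^{m-2-s}=(1+s_2)^{s_2-1}$, I would recognize this sum as the Abel multinomial $A_{m-1}(1,1;-1,-1)$. Identity (\ref{1}) with two variables $x_1=x_2=1$ and $n=m-1$ then evaluates it \emph{exactly} to $2(m+1)^{m-2}$. Dividing by the normalization yields $\PR(\pi_1=1)=\cdots=\PR(\pi_1=b)=\tfrac{2}{b(m+1)}\sim\tfrac{2}{bm}$, an exact identity that becomes the stated asymptotic.

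For $\pi_1=b+j$ past the boundary, as in Corollary \ref{boundary1} I would write $\PR(\pi_1=b+j)=\PR(\pi_1=b)-\sum_{s=0}^{\lceil j/b\rceil-1}\PR(A_{\pi_2,\dots,\pi_m}=[b+sb])$, where each term is the individual summand of Proposition \ref{1st} divided by the normalization. Since $j$ is fixed, only finitely many small $s$ occur, so I would expand each term using $\binom{m-1}{s}\sim m^s/s!$, $(m-s)^{m-2-s}\sim m^{m-2-s}e^{-s}$, and $(m+1)^{m-1}\sim e\,m^{m-1}$; after cancellation this gives $\PR(A_{\pi_2,\dots,\pi_m}=[b+sb])\sim \frac{(1+s)^{s-1}e^{-(s+1)}}{s!\,bm}$. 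The crucial recognition step is that, with $k=s+1$, $\frac{(1+s)^{s-1}e^{-(s+1)}}{s!}=\frac{e^{-k}k^{k-1}}{k!}=\PR_1(X=k)$ is precisely the Borel-$1$ pmf. Summing over $0\leq s\leq\lceil j/b\rceil-1$ collapses these into $\sum_{k=1}^{\lceil j/b\rceil}\PR_1(X=k)=1-\QR_1(\lceil j/b\rceil+1)$, and combining with $\PR(\pi_1=b)\sim 2/(bm)$ gives $\PR(\pi_1=b+j)\sim\frac{2-(1-\QR_1(\lceil j/b\rceil+1))}{bm}=\frac{1+\QR_1(\lceil j/b\rceil+1)}{bm}$, as claimed.

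The main obstacle is carrying the asymptotic expansion of each individual summand $\PR(A_{\pi_2,\dots,\pi_m}=[b+sb])$ to exactly the right order so that the subexponential factors assemble precisely into the Borel-$1$ pmf: one must confirm that $(1-s/m)^{m-2-s}\to e^{-s}$ supplies the needed $e^{-s}$ while the $(m+1)^{m-1}\sim e\,m^{m-1}$ normalization supplies the remaining $e^{-1}$, with no stray constants. The boundary computation, by contrast, is a clean exact evaluation via Abel's identity (\ref{1}) and presents no difficulty.
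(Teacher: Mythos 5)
Your proposal is correct and follows essentially the same route as the paper: the same decomposition $\PR(\pi_1=b+j)=\PR(\pi_1=b)-\sum_{s=0}^{\lceil j/b\rceil-1}\PR(A_{\pi_2,\dots,\pi_m}=[(s+1)b])$ from Proposition \ref{1st}, the same term-by-term asymptotic expansion recognizing the Borel-$1$ pmf $\frac{(s+1)^{s-1}e^{-(s+1)}}{s!}=\PR_1(X=s+1)$, and Abel's theorem for the boundary case. Your only refinement is evaluating the boundary sum exactly as $A_{m-1}(1,1;-1,-1)=2(m+1)^{m-2}$ via identity (\ref{1}), which gives $\PR(\pi_1=b)=\frac{2}{b(m+1)}$ exactly, where the paper only asserts the asymptotic $\sim \frac{2}{bm}$.
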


\begin{proof}
If $\pi_1=b+j$, then $A_{\pi_2, \dots, \pi_m}=[(s+1)b]$ for some $s$ so that $sb\geq j$. From Proposition \ref{1st}, this implies that
\begin{align}
&\PR(\pi_1=b+j)=\PR(\pi_1=b)-\sum_{s=0}^{\lceil j/b\rceil-1}\PR(A_{\pi_2, \dots, \pi_m}=[(s+1)b]) \notag \\
&=\PR(\pi_1=b)-\sum_{s=0}^{\lceil j/b\rceil-1} \frac{b \binom{m-1}{s} ((s+1)b)^{s-1} ((m-s)b)^{m-2-s}}{((m+1)b)^{m-1}} \notag \\
& \sim \PR(\pi_1=b)-\frac{1}{bm} \left(1-\QR_1(\lceil j/b\rceil+1)\right),
\end{align}
where we use that $j$ (and hence $s$) is small relative to $m$.

Hence we only need to check the boundary case:
\begin{multline}
\PR(\pi_1=1)=\cdots=\PR(\pi_1=b)=\\ \frac{b}{((m+1)b)^{m-1}}\sum_{s=0}^{m-1} \binom{m-1}{s} ((s+1)b)^{s-1} ((m-s)b)^{m-2-s} \sim \frac{2}{bm},
\end{multline}
where we apply Abel's binomial theorem in the asymptotics.
\end{proof}

\end{document}